\tikzstyle{vertex}=[circle,fill=black!25,minimum size=15pt,inner sep=1pt]
\tikzstyle{source}=[circle,fill=red!25,minimum size=20pt,inner sep=1pt]
\tikzstyle{sink}=[circle,fill=green!25,minimum size=20pt,inner sep=1pt]
\tikzstyle{selected vertex} = [vertex, fill=red!24]
\tikzstyle{edge} = [draw,thick,-latex]
\tikzstyle{weight} = [font=\tiny]
\tikzstyle{selected edge} = [draw,line width=5pt,-,red!50]
\tikzstyle{ignored edge} = [draw,line width=5pt,-,black!20]
\tikzstyle{diedge} = [draw,thick,-latex]
\tikzstyle{rw} = [bend left=10,black!20,->]
\newcommand{\myarrow}[1]{%
\ifthenelse{\lengthtest{#1 pt < 0pt}}%
{\arrowreversed[red]{stealth}}%
{\arrow[blue]{stealth}}
}
\tikzstyle{fl} = [%
\tikzstyle{bfl} = [fl=#1,red]
\tikzstyle{ofl} = [fl=#1,green]
\tikzstyle{flowval} = [font=\tiny,blue,auto=left]
\tikzstyle{psival} = [font=\tiny,blue,swap]
\tikzstyle{flv} = [font=\small]
\tikzstyle{blv} = [flv]
\tikzstyle{flvl} = [font=\small,swap]
\newtheorem{thm}{Theorem}
\newtheorem{prop}{Proposition}
\newtheorem{lem}{Lemma}
\theoremstyle{remark}
\let\qr=\eqref
\renewcommand{\d}{d\,}
\newcommand{\dd}[2]{\dfrac{d#1}{d#2}}
\newcommand{\RR}{\mathbb{R}}
\let\mc=\mathcal
\newcommand{\ab}[1]{\left|#1\right|}
\newcommand{\Ordo}[1]{{O(#1)}} 
\newcommand{\ordo}[1]{{o(#1)}} 
\newcommand{\OrdoOmega}[1]{{\varOmega(#1)}} 
\newcommand{\OrdoTheta}[1]{{\Theta(#1)}}
\newcommand{\aOrdoOmega}[1]{\ab{\OrdoOmega{#1}}}
\newcommand{\ddt}[1][t]{{\dd{}{#1}}}
\newcommand{\G}{\mathsf{G}}
\newcommand{\cy}{\sigma}
\newcommand{\lcy}{\xi}
\newcommand{\g}{\gamma}
\newcommand{\T}{^{T}}
\renewcommand{\t}[1][t]{({#1})}
\renewcommand{\ij}[1][ij]{_{#1}}
\newcommand{\kl}[1][kl]{\ij[#1]}
\renewcommand{\l}{\ell}
\newcommand{\J}{B}
\newcommand{\f}{\phi}
\newcommand{\ff}{f}
\newcommand{\ka}{b^{*}}
\newcommand{\p}{p}
\newcommand{\q}{q}
\let\e=\epsilon
\let\tl=\tilde
\newcommand{\supp}{\operatorname{supp}}
\newcommand{\lv}[1]{\l\left(#1\right)}
\newcommand{\ls}[2]{\lv{#1#2}}
\newcommand{\xpart}[1]{^{\mathbf #1}}
\newcommand{\ppart}{\xpart+}
\newcommand{\npart}{\xpart-}
\newcommand{\minp}{\operatorname{min_+}}
\newcommand{\xsupp}[2]{{#2}^{\mathbf #1}}
\newcommand{\gp}{{\xsupp+\gamma}}
\newcommand{\gm}{{\xsupp-\gamma}}
\newcommand{\diam}{\operatorname{diam}}
\newcommand{\diag}{\operatorname{diag}}
\newcommand{\BFS}{\mathcal{T}} 
\newcommand{\bfs}{\tau} 
\newcommand{\BOS}{\hat{\BFS}}
\newcommand{\ip}{\p^*} 
\newcommand{\iH}{H^*}
\newcommand{\ipsi}{\psi^*}
\newcommand{\ett}{{\mathbf1}}
\newcommand{\diett}{{\boldsymbol{\xi}}}
\newcommand{\rst}[1]{_{\vert{#1}}}
\newcommand{\Rst}[1]{\vert_{#1}}
\renewcommand{\norm}[1]{\lVert#1\rVert}
\newcommand{\normi}[1]{\norm{#1}_\infty}
\newcommand{\tbfs}{bfs\xspace}
\newcommand{\HH}{\Hat{H}}
\title{Convergence Properties for the Physarum Solver}
\author{%
\\
{Kentaro Ito}\\ {Dept.\ of Mathematical and Life Sciences, Hiroshima University}
\and
  {Anders Johansson}\\ {Dept.\ of Mathematics, Uppsala University}
\and
  {Toshiyuki Nakagaki}\\ {Dept.\ of Information Sciences, Future University Hakodate}
\and
  {Atsushi Tero}\\ {Research Institute for Electronic Sci., Hokkaido
  University}
}
\begin{document}
\maketitle

\begin{abstract}
  The Physarum solver \cite{teroetalphysA,onishi} is an intuitive
  mechanism for solving optimisation problems. The solver is based on
  the idea of a current reinforced electrical network, whereby the
  conductivity $\cy\t\in\RR_+^E$ is reinforced by the current or flow,
  $\f\t\in\RR_+^E$.  In this paper, we show how the Physarum solver
  obtains the solution to the linear transshipment problem on a
  digraph $G=(V,E)$.  We prove that the current $\f\t$ and $\cy\t$
  converge with an exponential rate to a positive flow minimising
  $\l(\f)=\sum_{ij}\l\ij\f\ij$.  The limit flow has full support on
  the optimal set $\HH$.  If we assume that $\HH$ is a connected
  subgraph, the electrical potential vector $\p\t\in\RR^V$ converges
  to a solution $\p^*$ of the dual problem which is a discrete
  \emph{$\infty$-harmonic function} (\cite{richmangames,tug-of-war09})
  defined on the vertices of a subgraph $\iH$ which in many cases is a
  spanning subgraph, i.e.\ $V(\iH)=V(G)$.
\end{abstract}

\section{Introduction}

Many biological systems solve problems in a decentralized manner.  As
a leading example, path-finding by {\it Physarum polycephalum}\/ (a
giant amoeba of true slime mold) is well studied.  Physarum can find
the shortest path in a maze and the risk-minimum path in the
inhomogeneous field of risk \cite{Fermat,Maze1,Maze2}. On the basis of
experimental work, a differential equation model of such path
minimisation has been proposed \cite{toshietalJTB,SoftMatter}. The
model is based on the idea of a current reinforced electrical network.
As current or flow increases through a medium the conductivity is
increased.

The Physarum solver has been primarily studied by numerical
simulation.  Indeed, the algorithm was extended in order to make it
applicable to a wider range of problems concerning the optimal design
of networks, e.g. the shortest network problem \cite{PASP} and
multi-objective optimisation \cite{TheoBiosci,teroetalscience}.
Numerical simulation shows a good performance in problem solving and a
correspondence to some interesting characteristics of the biological
system \cite{toshinewgen}. Onishi et al.\ have published a few papers
on the rigorous analysis of the Physarum solver
\cite{proof1,onishi,proof3}.  The work is pioneering and just at the
beginning.

A mathematical description of the Physarum solver is as follows. We
consider a weighted directed graph $G=(V,E,\l)$, where the positive
weights $\l\ij>0$ are interpreted as lengths of the arcs $ij\in E$.
For the physarum solver the candidate flow $\f\t$, $t>0$, is obtained
as an electric current in an electrical resistor network obtained from
an adapting positive conductivity vector $\cy\t = (\cy\ij(t))$,
$t\geq0$.  We apply fixed external current sources, described by a
fixed source vector $b=(b_i)$ which defines the flow requirements, and
we obtain from $\cy\t$ the current $\f\t=(\f\ij\t)$ and potential
$\p\t=(\p_i\t)$ using Kirchhoff's laws.  The physarum solver is then
completely specified by updating the conductivities according to the
recipe
\begin{equation}\label{e.dicondxx}
  \ddt\cy_{ij}\t + \cy\ij\t = \f\ij\t. 
\end{equation}
We can arbitrarily choose the initial state $\cy\ij\t[0]>0$, for all
$ij\in E$.  We show that $\f\t$ converges at an exponential rate to a
\emph{positive} flow $\hat\f$ that minimises the \emph{cost}
$\l(\f):=\sum\ij\l\ij\f\ij$.  In \cite{dennis59} and \cite{minty60}
minimum cost flow problems are solved using electrical networks with
\emph{non-linear} resistors. In contrast, the Physarum Solver works by
modifying networks with ordinary linear resistors.

In this report, we state a couple of theorems and proofs regarding the
algorithm by means of mathematical analysis. Using the above model, we
obtain a more general result than that obtained in \cite{onishi} which
gives the convergence for certain shortest-path problems in planar
graphs.  As stated above, we show convergence to a solution of the
\emph{transshipment problem} \cite{tardossurv} for the given weighted
digraph.  One consequence is that the shortest path in a maze is
certainly obtained and the analysis indicates additional utility and
performance of the algorithm. 

We also show that $\hat\p=\lim\p\t$ converge to a canonical dual
solution in the form of an \emph{$\infty$-harmonic function}, which,
generally, is defined outside the node support of the minimum flows.
Discrete infinity-harmonic functions have been introduced in
\cite{richmangames,tug-of-war09} as value functions for certain type
of games.  (Our definition of discrete $\infty$-harmonic functions is
slightly different for directed problems and take into consideration
the direction of the arcs in another way.)  The discrete
infinity-laplacian is highly non-linear, but here we obtain the
harmonic solution as a limit using solutions of the ordinary linear
laplacians $L(\cy\t)$.

What perhaps makes the Physarum solver stand out as an algorithm is
that its implementation is physically immediate; the computation
relies on physical quantities like conductivity, potential and flow,
which are present for many natural systems; in particular, they can be
derived from underlying diffusion processes.  This should make the
Physarum solver a candidate for a general description of how optimal
transport is handled in nature.  It should also be considered as a
possible basis for decentralised computer algorithms for mathematical
programming. We discuss briefly these issues in Section \ref{s.discuss}.

\subsection{Preliminaries}\label{s.prelim}

A vector $\f\in\RR^A$ is a real-valued functions on the finite
index-set $A$. We write $x=(x_\alpha)$, $\alpha\in A$.  The support
$\supp x$ is the set of $\alpha$ where $x_\alpha\not=0$. Arithmetic
operations and relations between vectors should be interpreted
component-wise, for example $xy^2/\l\leq z$ should mean the vector
$(x_\alpha y^2_\alpha/\l_\alpha \leq z_\alpha)$. Scalars are
interpreted as constant vectors of appropriate dimensions when
needed. We restrict the domain of vector $x$ to $B\subset A$ by
writing $x\rst{B}$ or $x\Rst B$.  The characteristic function for a
set $S\subset A$ is denoted $\ett_S$.  When using matrix-algebra, all
vectors are column vectors and the corresponding row-vector is denoted
by $x\T$. For a vector $x=(x_\alpha)$, we use $|x|$ to denote the
vector $(|x_\alpha|)$. We use $x\ppart$ and $x\npart$ to denote the positive
and negative parts, respectively, so that $x=x\ppart-x\npart$ and
$|x|=x\ppart+x\npart$. The norm $\|x\|$ is used to denote the $\ell_1$-norm of
$x$, i.e.\ $\|x\|=\sum_\alpha |x_\alpha|$, and the norm $\|x\|_\infty$
is the maximum norm $\|x\|_\infty = \max_\alpha |x_\alpha|$.

The language from graph theory is hopefully standard. However, we
consider mainly directed graphs $G=(V,E)$ without loops and multiple
edges. The term \emph{graph} will usually mean such digraphs.
Elements of $V$ are called \emph{nodes} (or \emph{vertices}) and are
denoted $i,j,k,l$ etc.; the elements of $E\subset V\times V$ are
called directed \emph{arcs} (\emph{edges}) and are denoted $ij$ $kl$,
etc.  For the arc $ij$, $i$ is the tail and $j$ is the head of the
arc.  We sometimes write $|G|$ for the number of edges $|E(G)|$.  The
\emph{underlying undirected graph} of a digraph $G$ is denoted by
$U(G)$.  An \emph{oriented subgraph} $H$ of $G$ is a digraph $H$ such
that $U(H)$ is a subgraph of $U(G)$. An oriented graph corresponds
with a vector $\diett_H\in\{0,1,-1\}^E$, where $\diett_H(ij)=1$ if $ij\in
H$, $-1$ if $ji\in H$ and zero otherwise. The oriented subgraph $H$ is
thus a \emph{directed subgraph} if $\diett_H=\ett_{E(H)}$. As
oriented subgraphs goes, we will mainly deal with oriented paths,
oriented cycles and oriented cuts (cut-sets) of $G$, and if the
corresponding subgraph is a sub-digraph of $G$ we talk of a
\emph{directed} cycle, path or cut in $G$.

We use standard asymptotic notation. Thus $g=\OrdoOmega f$ means
$\liminf |g|/|f| > 0$, $g=\Ordo f$ that $\limsup |g|/|f| <\infty$ and
$g=\ordo f$ that $\limsup |g|/|f|=0$, as the relevant limit is
taken. 

\subsection{The transshipment problem}

In a discrete setting the transshipment problem has the formulation of
a \emph{minimum cost flow problem} on a directed \emph{connected}
graph $G=(V,E)$ without upper capacities and positive linear costs.  A
\emph{flow} (or a \emph{$b$-flow}) is an arc-vector
$\f=(\f_{ij})_{ij\in E} \in \RR^{E}$ that satisfies, for all nodes
$i\in V$, Kirchhoff's Current Law
\begin{equation}
  \label{e.kirchoff1}
  \sum_{ij\in E} \f_{ij} - \sum_{ki \in E} \f_{ki} = b_i.
\end{equation}
The \emph{source vector} is the node vector $b\in\RR^V$ obtained as
the prescribed net out-flow at nodes. We assume that $\sum_i b_i = 0$.
If $b_i>0$ we say $i$ is a supply node (or source) and if $b_i<0$ a
demand node (or sink).  A flow is positive if $\f\ij\geq0$, meaning
that the flow goes in the direction of the arcs.
The cost is a positive linear function on flows
$$ \l(\f) := \l\T\f = \sum_{ij\in E} \l\ij\f\ij,\qquad \l\ij\geq 0. $$
The cost coefficients $\l\ij>0$ will be referred to as \emph{lengths}
prescribed to the arcs.  

The transshipment problem is to minimise the cost among all
\emph{positive flows}.  It can be shown (see
e.g.~\cite{tardossurv,wagneruncap}) to be equivalent to the more
general minimum cost flow problem and subsumes among others the
shortest path problem and the minimum cost assignment problem.  An
\emph{undirected problem} is a problem without the positive flow
constraint, with cost function $\l(|\f|)$. It can be mapped to an
equivalent transshipment problem if we replace each arc with a pair of
arcs in both directions.

For the given directed graph $G=(V,E)$, let
$\J=\J_G\in\RR^{V\times E}$ denote the \emph{boundary operator} to
$G$, i.e.\ the matrix given by
$$
\J_{ie}:=
\begin{cases}
  -1&\text{if $i$ is the head of arc $e$}\\
  +1&\text{if $i$ is the tail of arc $e$}\\
  0&\text{if $i$ is not incident with $e$.}
\end{cases}
$$
The linear transshipment problem can be stated quite effectively using
the matrix $\J$, i.e.\ 
\begin{equation}
  \text{minimise $z=\l\T\f$, where $\J \f = b$ and $\f\geq 0$, }
  \label{e.primal}
\end{equation}

For the given source vector $b\in\RR^V$, we denote the affine space of
corresponding flows by $\Phi=\Phi(b,G) \subset \RR^G$. The convex
\emph{polyhedron} of positive flows is denoted by
$\Phi^+=\Phi^+(b,G)$, i.e. $\Phi^+ = \{\f\in\RR^G: \J\f = b, \f\geq
0\}$. The target for the Physarum Solver is to reach the set of flows
$\f\in\Phi^+$ where the cost $\l(\f)$ is minimum. Since $\l>0$, this
makes up a convex \emph{polytope} (i.e.\ a bounded polyhedron)
$\Hat\Phi=\Hat\Phi(b,G,\l)$.

A \emph{cut} is a partition $(S,V\setminus S)$ of $V$ in two parts and
the corresponding \emph{oriented cut} is given by the vector
$\kappa^S:=\J\T \ett_S \in\{-1,+1,0\}^E$.  Given a flow
$\f\in\Phi(b,G)$, it holds that for any cut $\kappa = \kappa^{S}$ the
\emph{net flow} $\kappa\T\f$ equals
\begin{equation}\label{e.kadef}
  b(S) := b\T \ett_S = \sum_{i\in S} b_i = -\sum_{i\in V\setminus S} b_i.
\end{equation}
The transshipment problem \eqref{e.primal} is \emph{feasible}, i.e.\
$\Phi^+\not=\emptyset$, if and only if, for every $S\subset V$ with
$b(S)>0$, there is some arc from $S$ to $V\setminus S$. 

A \emph{negative cost cycle} is an oriented cycle
$\gamma\in\{0,1,-1\}^E$ such that the cost $\l(\gamma) < 0$.  That a
flow $\f\in\Phi^+$ admits an augmenting cycle $\gamma$ means that the
flow $x+\delta\gamma$ is feasible, i.e.\ positive, for some scalar
$\delta>0$. It is well known (e.g.\ \cite{lawler76book}) that a
positive flow is of minimum cost if and only if it admits no
augmenting negative cost cycle.  Since the problem \eqref{e.primal}
does not have any upper capacities, a positive flow $\hat\f\in\Phi^+$
belongs to $\Hat\Phi$ if and only if for every negative cost cycle
$\gamma$ there is some arc $ij$ such that $\gamma_{ij}=-1$ and
$\hat\f\ij=0$.

From linear programming theory \cite{lawler76book}, we obtain that the
\emph{dual} problem corresponding to \eqref{e.primal} is
\begin{equation}\label{e.dual}
  \text{maximise $w=\p\T b$, where $\J\T\p\leq \l$}
\end{equation}
for the dual variables $\p=(\p_i)\in \RR^V$.  In the physarum solver,
the dual variables $\p_i$, $i\in V$, are interpreted as potential
values in the electric network.  It is the \emph{cocycle} of potential
differences, $\J\T\p = (\p_i-\p_j)_{ij\in E}$, that carry all
information and $\p\in\RR^V$ will only be determined up to a constant.
The cocycle space is the range of $\J\T: \RR^V\to\RR^E$ and
constitutes the orthogonal complement to the \emph{cycle space}
$\Phi(0,G)$ with respect to the standard inner product $(x,y)\to x\T
y$ on $\RR^E$.  Elements of the cycle space are also referred to as
\emph{circulations}.

Instead of working with cocycles, we mainly use the corresponding
\emph{fields} (``electrical field strengths''), or \emph{slopes}, by
which we mean vectors $\psi\in\RR^G$ such that $\l\psi$ is a cocycle
on $G$. In other words, fields are vectors of the form
$$ 
\psi = \Psi(\p) :=(\J\T\p)/\l 
    = \left(\frac{\p_i-\p_j}{\l\ij}\right)_{ij\in E}. 
$$
Notice that, the constraint in \qr{e.dual} can be written $\psi \leq
1$.  Fields and cocycles are for our purposes equivalent entities and
the potential can be recovered, as $\p_i = \p_k + \l(\psi\pi)$, where
$\pi$ is any oriented path from the (fixed) vertex $k$ to vertex $i$.

\subsection{The non-symmetric physarum solver}

Given a transshipment problem specified by $(G,b,\l)$ as above, an
electrical network is specified by the positive \emph{conductivity}
$\cy=(\cy\ij)\in\RR^E_+$.  The \emph{conductance} vector is then
$\cy/\l$ and the \emph{resistance} vector is given by $\l/\cy$.
Kirchhoff's equations can be stated using the weighted graph laplacian
$$ L(\cy) := \J \G \J\T,$$
where $\G =\diag(\sigma/\l)\in\RR^{E\times E}$ is the diagonal
matrix with the conductance vector $\cy/\l$ along the diagonal.  In
our setting, Kirchhoff's equations amounts to finding a solution
$\p\in\RR^V$ to the discrete Neumann problem
\begin{equation}
L(\cy)\p = b \label{e.kirchhoffs}
\end{equation}
A solution $\p$ gives the flow (the current) $\f$ via \emph{Ohm's law}
\begin{equation}\label{e.ohm}
  \f\ij =  \cy\ij\frac{\p_i-\p_j}{\l\ij} = \cy\ij \psi,
\end{equation}
where 
$$ \psi = \Psi(\p) = \J\T\p/\l. $$

In the Physarum Solver, we consider an electrical network which evolves
through ``time'' $t\in[0,\infty)$, where the state is specified by the
corresponding time-varying conductivity vector $\cy\t\in\RR_+^E$.  We
let $\f\t\in\RR^E$ and $\p\t\in\RR^V$ denote the corresponding current
and potential, which are derived from $G$, $b$, $\l$ and $\cy\t$ by
solving Kirchhoff's equations. The current $\f\t$ will at all times
constitute a flow in $\Phi(b,G)$, but not necessarily a positive flow,
nor will the potential vector $\p\t$ automatically be feasible for the
dual problem. Since $\supp\cy\t=G$ which is connected by assumption,
we can make the vector $\p\t$ unique by stipulating that its lowest
value is zero.

In this paper we use a version of the Physarum Solver,
where the conductivity vector $\cy\t = (\cy_{ij}\t)_{ij\in E}$ is
updated according to the non-linear equation
\begin{equation}\label{e.dicond2}
  \ddt \cy_{ij}\t + \cy_{ij}\t = \f_{ij}\t. 
\end{equation}
We can arbitrarily choose the initial condition as long as
$\cy_{ij}\t[0]>0$, for all $ij\in G$.  For fixed $\cy(0)>0$, $\l$ and
$b$, in this paper, we refer to the electrical network obtained by
letting $\cy\t$, evolve, for $t\geq0$, according to \eqref{e.dicond},
as the \emph{Physarum Solver}.
Ohm's law \eqref{e.ohm} immediately gives the following
alternative form of \eqref{e.dicond2}
\begin{equation}\label{e.dicond}
  \ddt \log\cy\t = \psi\t - 1,
\end{equation}
where $\psi\t := \Psi\left( \p\t \right)$.

The ``non-symmetric'' physarum solver defined by \eqref{e.dicond2}, or
equivalently by \qr{e.dicond}, is different from the
previous symmetric solver, presented in \cite{teroetalphysA} and
analysed in e.g.\ \cite{onishi}, where $|\psi\t| - 1$ and $|\f\ij\t|$
was used on the right hand sides of \eqref{e.dicond} and
\eqref{e.dicond2}, respectively.  The use of a non-symmetric
conductivity vector $\cy\ij\t$ is perhaps somewhat surprising; in an
electrical network, conductivity and conductance works symmetrically
at a fundamental level.  If we have a double (undirected) arc
$\{ij,ji\}$, the electrical network will have an effective
conductivity $\cy_{ij}+\cy_{ji}$ across the corresponding arc and the
flow across the two arcs will be proportional to these
conductances. The two terms will not evolve identically; if
the flow is from $i$ to $j$ consistently then the conductivity
$\cy_{ji}$ in the opposite direction will be of order
$\Ordo{e^{-t}}$ as $t\to\infty$.

\subsection{Duality analysis}

\begin{figure}[h]
  \centering
  \begin{tikzpicture}[scale=0.7,line cap=round,line
    join=round,>=triangle 45,x=1.0cm,y=1.0cm]
    \small \draw [color=gray!30,dash pattern=on 2pt off 2pt,
    xstep=1.0cm,ystep=1.0cm] (-1,-1) grid (10,6);
    \draw[->,color=black] (-1,0) -- (11,0); \foreach \f in
    {-1,1,2,3,4,5,6,7,8,9,10} \draw[shift={(\f,0)},color=black]
    (0pt,2pt) -- (0pt,-2pt) node[below] {};
    \draw[shift={(10,0)},color=black] (0pt,2pt) -- (0pt,-2pt)
    node[below] {$1$}; \draw[color=black] (-8pt,-2pt) node[below]
    {$0$}; 
    \draw[->,color=black] (0,-0.64) -- (0,6.05); \foreach \y in
    {-1,1,2,3,4,5,6} 
    \draw[shift={(0,\y)},color=black] (2pt,0pt) -- (-2pt,0pt) node[left] {}; 
    \draw[shift={(0,4)},color=black]
    (2pt,0pt) -- (-2pt,0pt) node[left] {$1$};
    \draw[color=black] (10.72,0.05) node [anchor=south west]
    {$\f\ij$}; \draw[color=black] (0.0,6) node [above]
    {$\psi\ij$};

    \coordinate (origo) at (0,0); \coordinate (A) at (0,4);
    \coordinate (B) at (10.5,4); \coordinate (C) at (0,-0.62);

    \draw [line width=2pt] (B)--(A)--(C);
    \draw [very thin, dotted] (-1,4)--(A);

    \draw (B) node[right] {``in-kilter line''};

    \foreach \nr/ \d / \r / \p in {1-/5/1.2/2} { 
      \draw (\d,0) node (D) {}; 
      \coordinate (yy) at (D |- A); 
      \coordinate (xx) at ($ (origo)!\r!(yy) $); 
      \coordinate (zz) at ($ (origo)!-.2!(yy) $); 
      \draw (xx) node[above] {$(\f\ij,\psi\ij)$};
      \draw[style=dotted] (xx.center) -- (yy.center); 
      \draw[thick] (zz.center) -- (xx.center); 
      \draw[style=dotted] (yy) -- (D);
      \fill[color=red] (xx) circle (4pt); 
      \fill[color=gray] (yy) circle (2pt) (D) circle (2pt); 
      \draw (D) node[below] (Z)
      {$\cy\ij$} (Z.center)+(2.5,0) node (ZP) {}; 
      \draw[->,thick] (Z) -- (ZP); 
    }

  \end{tikzpicture}

  \caption{The mechanism in the physarum solver as a primal-dual
    algorithm explained in a kilter diagram: If $\f\ij > \cy\ij$
    the conductivity $\cy\ij\t$ increases, from time $t$ to $t+dt$, by
    a factor $1+dt(\psi\ij-1)$.}
  \label{fig.kilter}
\end{figure}
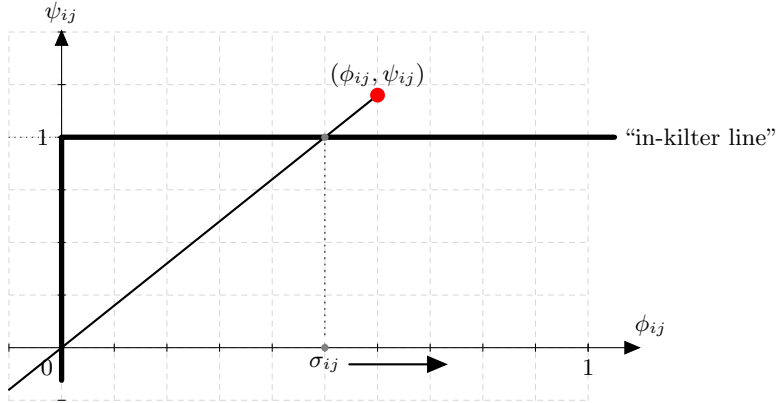

The convergence of the physarum solver to an optimal flow can be
discussed in the context of primal-dual methods (see
\cite{lawler76book}, \cite{MR1259413}) to solve mathematical
programming problems.  
By the Theorem of
Complementarity of Slackness (\cite[Theorem~13.4]{lawler76book}), a
primal-dual pair of feasible solutions $(\p,\f)$, corresponds to
optimal solutions of the dual and primal problems, respectively,
precisely when
\begin{equation}
  (\psi\ij - 1)\f\ij = 0,\qquad\forall\, ij\in E,\label{e.opt}
\end{equation}
which means that, for each arc $ij\in E$, the points $(\psi\ij,\f\ij)$
lie on the broken ``in-kilter line'' $\{0\}\times (-\infty,1] \cup
[0,\infty) \times \{1\}$ depicted in figure~\ref{fig.kilter}.  This
illustration of duality is used in e.g.\ \cite{dennis59}
\cite{minty60}, where it is showed that a non-linear resistor network
having the kilter-line as a ``characteristic curve'' for the resistor
along edge $ij$ will produce a current that solves the corresponding
minimum cost flow problem.

For the linear resistor networks used in the Physarum Solver, Ohm's
law, \eqref{e.ohm} ensures that the point $(\f\ij,\psi\ij)$ is on the
line going through the origin and the point $(\cy\ij,1)$.  The
optimality criterion \qr{e.opt} can therefore be reformulated as
\begin{equation}
  \text{$\psi\leq 1$ and $\f=\cy$.}\label{e.opt2}
\end{equation}
The dynamics of the physarum solver increase the logarithm of
conductivity if $\psi\ij>1$ and make it decrease if $\psi\ij<1$. From
this, we obtain the following observation.
\begin{prop}\label{p.fixobs}
  Any attractive fixed-point for the Physarum Solver must correspond
  to a primal-dual pair $(\f,\psi)$ where, for all $ij\in E$,
  $(\f\ij,\psi\ij)$ lie on the kilter line: $\psi\ij=1$, if
  $\f\ij>0$, and $\psi\ij\leq1$ if $\f\ij=0$. It will thus
  correspond to an optimal solution of the primal problem
  \eqref{e.primal} and the dual problem \eqref{e.dual}.
\end{prop}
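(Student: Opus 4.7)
The plan is to read off the fixed-point and stability conditions directly from the ODE \eqref{e.dicond2}. At any equilibrium $\cy^*$, setting the time-derivative to zero gives $\cy_{ij}^* = \f_{ij}^*$ for every $ij \in E$. Since the current $\f^*$ is obtained via Ohm's law from the solution $\p^*$ of Kirchhoff's equations $L(\cy^*)\p^* = b$, it automatically satisfies $\J\f^* = b$; combined with $\f^* = \cy^* \geq 0$, this shows $\f^* \in \Phi^+$, so $\f^*$ is primal-feasible.

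The first main step is to derive the kilter condition on each arc. For edges with $\cy_{ij}^* > 0$, the logarithmic form \eqref{e.dicond} reads $\psi_{ij}^* - 1 = \ddt \log \cy_{ij}^* = 0$, forcing $\psi_{ij}^* = 1$. For edges with $\cy_{ij}^* = 0$ (equivalently $\f_{ij}^* = 0$), I would invoke attractivity: perturbing the fixed point by setting $\cy_{ij} = \varepsilon > 0$ and using that $\cy_{ij}(t)$ must decay back to $0$ for sufficiently small $\varepsilon$ shows $\ddt \log \cy_{ij} = \psi_{ij} - 1 \leq 0$ in a one-sided neighbourhood of the equilibrium; by continuity of $\psi$ along the approaching trajectories this yields $\psi_{ij}^* \leq 1$.

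Combining the two cases, $\psi^* \leq 1$ everywhere and $\f_{ij}^* > 0$ implies $\psi_{ij}^* = 1$, which is precisely the complementary slackness relation \eqref{e.opt}. Since $\psi^* \leq 1$ is equivalent to $\J\T \p^* \leq \l$, the potential $\p^*$ is dual-feasible, and the Theorem of Complementary Slackness (\cite[Theorem~13.4]{lawler76book}) then delivers optimality for both \eqref{e.primal} and \eqref{e.dual}.

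The main technical obstacle is the behaviour of $\psi$ at boundary equilibria where $\cy_{ij}^* = 0$ for some arcs: the map $\cy \mapsto \psi(\cy)$ is singular exactly when $\supp\cy$ disconnects the sources from the sinks, and the logarithmic form \eqref{e.dicond} is then ill-defined on the boundary itself. The attractivity hypothesis must be used both to rule out such degeneracies—by guaranteeing that nearby strictly positive trajectories converge to $\cy^*$—and to justify taking the limit of $\psi$ along those trajectories to obtain the required one-sided inequality.
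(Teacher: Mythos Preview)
Your argument is correct and is essentially the same as the paper's. In the paper the proposition is not given a separate proof: it is stated as an ``observation'' following the paragraph immediately preceding it, which notes that Ohm's law places $(\f\ij,\psi\ij)$ on the line through the origin and $(\cy\ij,1)$, and that the dynamics \eqref{e.dicond} push $\log\cy\ij$ up when $\psi\ij>1$ and down when $\psi\ij<1$, so an attractive equilibrium must sit on the kilter line. Your write-up unpacks exactly this reasoning---$\cy^*=\f^*$ at a fixed point, $\psi\ij^*=1$ where $\cy\ij^*>0$, and attractivity forcing $\psi\ij^*\leq1$ where $\cy\ij^*=0$---and you add the useful remark about the boundary singularity that the paper leaves implicit.
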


\subsection{The primal convergence theorem}

However, it remains to show that $\cy\ij\t$ actually converges to some
attractive fixed point.  The optimal set is the subgraph $\HH$ of $G$
induced by arcs supporting some minimum cost flows, i.e.
$$\HH = G[\cup\{ \supp\ff: \ff\in \Hat\Phi(b,G)\} ]. $$ 
By the characterisation of optimal flows and the convexity of
$\Hat\Phi$, $\HH$ only support zero-cost cycles and it follows that a
positive flow is optimal precisely if it's support is contained in
$\HH$.
\begin{thm}\label{t.converge1}
  Assume that that the stated problem \eqref{e.primal} is
  feasible. Then the current $\f\t$ converges exponentially fast to
  some optimal flow $\hat\f\in\Hat\Phi$, i.e. $\f\t = \hat\f \pm
  e^{-\aOrdoOmega t}$.  The limit flow $\hat\f$ has moreover full
  support, i.e. $\supp\hat\f=\HH$.
\end{thm}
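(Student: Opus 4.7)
My approach is to set up a Lyapunov function for \eqref{e.dicond}, apply LaSalle's invariance principle to identify the asymptotic attractor, and then linearise the dynamics to upgrade the convergence to an exponential rate. First I would verify that $\cy(t)$ is globally defined with $\cy(t)>0$ for all $t\ge 0$. The logarithmic form \eqref{e.dicond} automatically preserves positivity; global existence reduces to a finite-interval a~priori bound on $\|\psi(t)\|_\infty$, which follows by spectral control of the weighted Laplacian $L(\cy(t))$ as long as $\cy(t)$ stays bounded and its support remains a connected spanning subgraph.

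The main construction is the Lyapunov function. Fix a reference optimal flow $\bar\f\in\Hat\Phi$ with $\supp\bar\f=\HH$ (which exists as an interior point of the polytope $\Hat\Phi$), and consider the Bregman-type divergence
\begin{equation*}
  V(\cy) := \sum_{ij\in E}\l_{ij}\!\left[\cy_{ij}-\bar\f_{ij}-\bar\f_{ij}\log(\cy_{ij}/\bar\f_{ij})\right],
\end{equation*}
with the convention that the $ij$-summand reduces to $\l_{ij}\cy_{ij}$ when $\bar\f_{ij}=0$. A direct computation using \eqref{e.dicond2}, Ohm's law \eqref{e.ohm}, and the flow identity $\J\bar\f=b$ yields
\begin{equation*}
  \frac{dV}{dt} \;=\; \bigl(\l\T\f - \p\T b\bigr) \;+\; \bigl(C^* - \l\T\cy\bigr),
\end{equation*}
where $C^* = \l\T\bar\f$ is the optimal primal cost. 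Proving $dV/dt\le 0$ is the crux of the argument and, I expect, the principal obstacle: the first bracket equals $\sum\l_{ij}\cy_{ij}\psi_{ij}(1-\psi_{ij})$ by Ohm's law, and one must combine this with LP weak duality and the coupling between $\f(t)$ and $\p(t)$ through Kirchhoff to sign the sum. If the bare Bregman divergence is not monotone, I would augment $V$ by a dual-side term such as the dissipated power $\p(\cy)\T b$ (whose derivative along the trajectory is $-\sum\l_{ij}\psi_{ij}^2\dot\cy_{ij}$ by the envelope theorem), or average $V$ over a family of reference flows so that cross terms cancel.

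With monotonicity secured, $V\ge 0$ provides a uniform lower bound and LaSalle's invariance principle localises the $\omega$-limit set of $\cy(t)$ inside the largest invariant subset of $\{dV/dt=0\}$; by Proposition~\ref{p.fixobs} this coincides with the set of primal-dual optima, where $\f=\cy$, $\psi\le 1$, and $\psi_{ij}=1$ on $\supp\cy$. Strict monotonicity of $V$ off the fixed-point set forces the trajectory to converge to a single $\hat\f\in\Hat\Phi$. Exponential convergence then comes from linearising \eqref{e.dicond} around $(\hat\f,\hat\p)$: for $ij\notin\HH$ the limit satisfies $\hat\psi_{ij}<1$ strictly (otherwise $ij\in\HH$), giving $\cy_{ij}(t)=\Ordo{e^{-ct}}$; on $\HH$ the Hessian of $V$ restricted to the flow direction is positive definite, and the classical exponential stability theorem provides the remaining rate. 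Together these yield $\f(t)=\hat\f\pm e^{-\aOrdoOmega t}$ via $\dot\cy=\f-\cy$.

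Finally, the full-support claim $\supp\hat\f=\HH$ is established by contradiction: if $\hat\f_e=0$ for some $e\in\HH$, pick an alternate reference $\bar\f{}'\in\Hat\Phi$ with $\bar\f{}'_e>0$ (which exists by the definition of $\HH$) and rerun the Lyapunov analysis with $\bar\f{}'$ in place of $\bar\f$. The Bregman term $-\l_e\bar\f{}'_e\log\cy_e(t)$ diverges to $+\infty$ as $\cy_e(t)\to 0$, contradicting the uniform upper bound $V(\cy(t))\le V(\cy(0))$ from monotonicity. Hence every arc of $\HH$ appears in the support of the limit flow.
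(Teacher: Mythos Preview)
Your outline is a genuine alternative to the paper's argument, but it has a real gap at exactly the point you yourself flag as the ``principal obstacle'': you never establish that $dV/dt\le 0$. Your computation is correct,
\[
  \frac{dV}{dt} = \bigl(\l\T\f - \p\T b\bigr) + \bigl(C^* - \l\T\cy\bigr)
               = \sum_{ij}\l_{ij}\cy_{ij}\psi_{ij}(1-\psi_{ij}) \;+\; \bigl(C^*-\l\T\cy\bigr),
\]
but neither bracket has a sign: $\psi$ is not dual-feasible along the trajectory (so weak duality does not apply), and $\cy$ is not a flow (so $\l\T\cy$ need not dominate $C^*$). The two fixes you suggest --- adding the dissipated power or averaging over reference flows --- are not worked out, and it is not clear either one closes the gap for the \emph{directed} solver \eqref{e.dicond2}. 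Without monotonicity, LaSalle does not fire, and the rest of the argument (single-point convergence, the full-support contradiction via $-\l_e\bar\f'_e\log\cy_e\to+\infty$) collapses with it. There is a second gap in your linearisation step: you assert that $\hat\psi_{ij}<1$ strictly for $ij\notin\HH$, but this is a statement about the \emph{particular} dual limit the dynamics selects, not a consequence of $ij\notin\HH$ (an arc outside $\HH$ can be tight in \emph{some} optimal duals). You would need to identify the limiting dual first, which is precisely what the paper does only \emph{after} Theorem~\ref{t.converge1}, in Theorem~\ref{t.infharm}.

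The paper avoids both difficulties by never invoking a Lyapunov function. It integrates \eqref{e.dicond} to $\lcy\ij(t):=\log(\cy_{ij}(t)/\cy_{ij}(0))=t(\bar\psi_{ij}(t)-1)$ and observes that for any circulation $\gamma=\gamma^{+}-\gamma^{-}$ the cocycle term drops out, giving the purely combinatorial identity
\[
  M_{\gamma^{-}}(\lcy(t)) = r\, M_{\gamma^{+}}(\lcy(t)) + (r-1)\,t,\qquad r=\l(\gamma^{+})/\l(\gamma^{-}).
\]
Applying this with $\gamma=\bfs_2-\bfs_1$ for basic feasible solutions $\bfs_1,\bfs_2$ (one optimal, one through a given arc, supplied by Lemma~\ref{l.corrij}) directly yields $\cy_{kl}(t)=\Ordo{e^{-a_1 t}}$ for $kl\notin\HH$ and $\cy_{ij}(t)=\OrdoOmega{1}$ for $ij\in\HH$, with explicit rate $a_1=1-\max_{\bfs\notin\BOS}\l(\hat\bfs)/\l(\bfs)$. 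The continuity lemma for Kirchhoff's equations (Lemma~\ref{l.dcont}) then converts these conductivity bounds into $\psi_{ij}(t)=1+\Ordo{e^{-a_1 t}}$ on $\HH$, and exponential convergence of $\f(t)$ follows. The full-support claim is a by-product of the lower bound on $\cy\rst{\HH}$, not a separate argument. If you want to pursue the Lyapunov route, the missing ingredient is a genuine proof of monotonicity for the non-symmetric dynamics; absent that, the paper's circulation identity is doing work your proposal does not replace.
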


Note that, the fact that the Physarum Solver converges to a flow with
full support on the optimal set puts it on par with interior point
methods (see e.g. \cite{Roos2006}) in linear optimisation.  Although
there are some similarities, the Physarum Solver seems to be distinct
from interior point algorithms: For instance, neither the current
$\f\t$ nor the potential $\p\t$ are supposed to be feasible solutions
to \qr{e.primal} and \qr{e.dual} and are not interior points in this
sense.  In section \ref{s.discuss}, we briefly discuss possible
extensions of the Physarum Solver to more general linear and
non-linear optimisation problems.

\subsection{Infinity harmonic functions and the dual convergence
  theorem}

Given a directed graph $G=(V,E)$ and a subset $S\subset V$, we say
that a potential $\p\in\RR^V$ with corresponding field
$\psi=\Psi(\p)$, is a discrete \emph{$\infty$-harmonic function} on
$G\setminus S$, if for all $i\in V(G)\setminus S$, we
have
\begin{equation}
  \label{e.infharm}
  \max_{k: ki\in E} \psi_{ki} = \max_{j: ij\in E} \psi_{ij} \geq 0
\end{equation}
We can restate this condition as follows: Consider the \emph{level sets} 
$F_r:=\psi^{-1}(r)$, $r\geq 0$, of the field $\psi=\Psi(p)$ and define
$$ H_r:=\cup_{s>r} F_s = \psi^{-1}\left((r,\infty)\right). $$
Then $\p$ is $\infty$-harmonic on $F_0\cup H_0\setminus S$ if and only
if, for all $r$, $F_r$ is a union of directed paths with endpoints
contained in $S\cup V(H_r)$.

Discrete functions satisfying \eqref{e.infharm} have been studied
\cite{richmangames,auctionplay,tug-of-war09} as values for stochastic
two-person games which are symmetric in a certain sense.  The results
in \cite{tug-of-war09} concerns the more general concept of length
spaces. Our definition above is a little bit different, since we take
the maximum over in-going and outgoing arcs in \eqref{e.infharm},
separately. For the undirected problems, i.e.\ digraphs $G=(V,E)$
where $ji\in E$ whenever $ij\in E$, the definitions are equivalent.

Since all oriented cycles in $\HH$ have zero cost, it follows that
$\hat\psi=\diett_{\HH}$ is a field on $\HH$.  The following lemma is
an adaptation of Theorem~12 in \cite{auctionplay}.
For $A>0$, let 
\begin{lem}\label{l.infext}
  Assume that $\HH$ is \emph{connected} and that
  $\hat\p\in\RR^{V(\HH)}$ is the potential on $\HH$ given by
  $\Psi(\hat\p)=\diett_H$ and $\min\hat\p=0$.  
  Then there is a unique
  extension $\ip\in\RR^{V(\iH)}$ of $\hat\p$, where $\HH\subset \iH
  \subset G$, such that $\ip\rst{V(\iH)}$ is $\infty$-harmonic on
  $\iH\setminus\supp b$. It is the unique such extension that maximises  
  $$S(\ip,\iH,A) := \sum_{r\in\RR} |\Psi(p)^{-1}(r)| A^r. $$
  for all sufficiently large $A$.  Moreover, $\ip$ is dually feasible
  for \qr{e.dual}, i.e.\ $\Psi(\ip) \leq1$ for all arcs $ij\in G$.
\end{lem}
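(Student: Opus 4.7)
The plan is to construct $(\iH, \ip)$ by a greedy procedure that iteratively extends $V(\HH)$ along directed paths of maximal slope, and then verify $\infty$-harmonicity, dual feasibility, and uniqueness via the $S$-maximisation characterisation.

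\emph{Base case.} Start with $V_0 := V(\HH)$ and $p^{(0)} := \hat p$. By LP duality applied to \qr{e.primal}, together with the connectedness of $\HH$, $\hat p$ coincides, up to an additive constant, with the restriction to $V(\HH)$ of any dual-optimal potential $q \in \RR^V$, which satisfies $\Psi(q) = 1$ on $E(\HH)$ and $\Psi(q) \leq 1$ elsewhere; hence $\Psi(\hat p) \leq 1$ on every arc of $G$ with both endpoints in $V(\HH)$. Since $\HH$ supports a positive flow, every $i \in V(\HH) \setminus \supp b$ has in-arcs and out-arcs in $\HH$ of slope $1$ (by Kirchhoff balance at $i$), so the $\infty$-harmonic equation \qr{e.infharm} already holds at such $i$ with common value $1$.

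\emph{Inductive step.} At stage $t$ with current data $(V_t, p^{(t)})$, consider all directed paths $P : v_0 \to \cdots \to v_m$ in $G$ with $v_0, v_m \in V_t$ and interior vertices in $V \setminus V_t$, and set $r(P) := (p^{(t)}_{v_0} - p^{(t)}_{v_m})/\length(P)$. If $r_t := \max_P r(P) \geq 0$, add the interior vertices of a realising path to $V_{t+1}$ with potentials chosen so that every arc of that path has slope exactly $r_t$; otherwise terminate. Let $\iH$ be the subgraph of $G$ induced by the final $V_T$ and let $\ip := p^{(T)}$. Each $i \in V(\iH) \setminus \supp b$ added at stage $t$ has in- and out-arcs of slope $r_t$ from its adding path, and any other arc of $G$ incident to $i$ with both endpoints in $V(\iH)$ has slope at most $r_t$ (a strictly larger slope would either have produced a better path at stage $t$, contradicting maximality of $r_t$, or would reuse $i$'s potential at a later stage while still being visible at stage $t$). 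This gives \qr{e.infharm} at $i$. Dual feasibility $\Psi(\ip) \leq 1$ follows since $r_t \leq r_0 = 1$ throughout.

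\emph{Uniqueness via $S$-maximisation.} For $A$ sufficiently large, maximising $\sum_r |\Psi(p)^{-1}(r)| A^r$ is equivalent to lex-maximising the sorted vector of level-set cardinalities read from the highest slope downward. We claim the greedy construction is the unique such lex-maximiser, by induction on decreasing slope levels: at each level $r$, the set of arcs any $\infty$-harmonic extension can place at slope $r$, given the structure already fixed at higher levels, is maximised exactly by the greedy choice, because omitting one of the endpoint-determined paths of slope $r$ would force an interior vertex on it to violate \qr{e.infharm} (its maximal in- and out-slopes would cease to match). Hence $(\iH, \ip)$ is simultaneously the unique $S$-maximiser and the unique $\infty$-harmonic extension. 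The main obstacle lies precisely here: controlling the interplay between path choices at a given level and the potentials fixed at higher levels requires the detailed inductive comparison alluded to above, adapting Theorem~12 of \cite{auctionplay} from undirected length spaces to the directed-graph field setting considered here.
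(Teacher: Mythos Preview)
Your greedy construction---iteratively adjoining directed trajectories of maximal slope and assigning constant slope along each---is exactly the construction the paper carries out in Section~\ref{s.infharm}, and your lex-max reading of $S(\ip,\iH,A)$ is the same uniqueness argument the paper sketches. The one substantive difference is how the bound on the initial trajectory slope is obtained: you appeal to LP duality (a dual-optimal $q$ extending $\hat p$ with $\Psi(q)\le 1$ everywhere), while the paper argues directly that an $\HH$-trajectory of slope $\ge 1$ would close up to an augmenting non-positive-cost cycle; either route works, but note your claim ``$r_0=1$'' should read $r_0<1$, and the monotonicity $r_{t+1}\le r_t$ that you use implicitly is justified in the paper by the averaging observation that any $(H',p')$-trajectory of slope exceeding $r$ would extend to an $(H,p)$-trajectory of slope exceeding $r$.
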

We defer the precise construction to
section~\ref{s.infharm}.  However, it should be noted that, if $G$ is
connected and comes from an undirected problem, i.e.\ if all arcs
comes in pairs of 2-cycles, then $\HH$ is connected and $V(\iH) =
V(G)$. Notice also that $\HH$ is connected if, for all $S\subset V$
such that the symmetric difference $S\bigtriangleup (\supp b) \not=\emptyset$,
we have $b(S)\not=0$; in particular,  this holds for a generic source vector $b$.

The following theorem states that $\p\t$ converges to a
$\infty$-harmonic function defined on $\iH$. 
\begin{thm}\label{t.infharm}
  Assume that $\HH$ is connected and \qr{e.primal} is feasible. Then
  $$
  \norm{\p\t\Rst{V(\iH)} - \ip\Rst{V(\iH)}}_\infty 
    = e^{-\aOrdoOmega t}. 
  $$
  where $\ip\rst{\iH}$ is the unique $\infty$-harmonic extension of
  $\hat\p$ obtained in \lemref{l.infext}.
\end{thm}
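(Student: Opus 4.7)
The plan is to derive the potential convergence from Theorem~\ref{t.converge1} in three stages: flows $\to$ conductivities $\to$ fields/potentials. Solving the linear ODE \eqref{e.dicond2} explicitly,
$$ \cy_{ij}\t = \cy_{ij}(0)\,e^{-t} + \int_0^t e^{-(t-s)}\f_{ij}(s)\,ds, $$
the primal exponential rate $\f\t=\hat\f\pm e^{-\aOrdoOmega t}$ from Theorem~\ref{t.converge1} transfers directly to $\cy\t=\hat\f\pm e^{-\aOrdoOmega t}$. In particular, on arcs $ij\in E(\HH)$ both $\cy_{ij}\t$ and $\f_{ij}\t$ converge to $\hat\f_{ij}>0$, so Ohm's law \eqref{e.ohm} yields $\psi_{ij}\t=\f_{ij}\t/\cy_{ij}\t\to 1$ at the same exponential rate. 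Since $\HH$ is connected, the prescription $\psi=1$ on $E(\HH)$ together with the normalisation $\min\p\t=0$ pins down $\p\t\Rst{V(\HH)}$ uniquely, and we obtain $\p\t\Rst{V(\HH)}\to\hat\p\Rst{V(\HH)}$ exponentially.

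The harder stage is to handle arcs $ij\notin\HH$, for which $\cy_{ij}\t\to 0$ without an \emph{a priori} well-defined rate. The identity \eqref{e.dicond}, $\ddt\log\cy_{ij}\t=\psi_{ij}\t-1$, makes the existence of a limit $\psi^*_{ij}:=\lim\psi_{ij}\t$ equivalent to the existence of an exponential decay rate $1-\psi^*_{ij}$ for $\cy_{ij}\t$. I would establish these limits by induction on layers of $V(\iH)\setminus V(\HH)$ ordered by the level sets of $\Psi(\ip)$ described after Lemma~\ref{l.infext}. At each internal vertex $i\notin\supp b$, Kirchhoff's current law
$$ \sum_{j:\,ij\in E}\cy_{ij}\t\,\psi_{ij}\t \;=\; \sum_{k:\,ki\in E}\cy_{ki}\t\,\psi_{ki}\t \;+\; b_i $$
dictates a dominant exponential balance: writing $\cy_{ij}\t\asymp e^{-\alpha_{ij}t}$, the leading contributions of order $e^{-\min\alpha\cdot t}$ (with nonzero coefficient $1-\alpha$) must cancel across the in- and out-sums, which forces $\min_j\alpha_{ij}=\min_k\alpha_{ki}$. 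In terms of $\psi^*$ this is exactly the $\infty$-harmonic identity \eqref{e.infharm}.

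In the third stage I identify the limit. The field $\psi^*$ so obtained is dually feasible (otherwise some $\cy_{ij}\t$ would grow exponentially, contradicting $\cy\t\to\hat\f$), equals $1$ on $E(\HH)$, and is $\infty$-harmonic on $V(\iH)\setminus\supp b$. By the uniqueness part of Lemma~\ref{l.infext}, $\psi^*=\Psi(\ip)$ on $E(\iH)$. The exponential convergence then propagates outward from $V(\HH)$: for any $i\in V(\iH)\setminus V(\HH)$, pick a directed path in $\iH$ from some vertex of $V(\HH)$ to $i$ and integrate the exponentially-convergent slopes along its arcs, starting from the already-established exponential convergence on $V(\HH)$, to conclude $\norm{\p\t\Rst{V(\iH)}-\ip\Rst{V(\iH)}}_\infty = e^{-\aOrdoOmega t}$.

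The principal obstacle is the inductive step in the second stage: showing that the decay exponents $\alpha_{ij}$ for $ij\notin\HH$ are genuine limits rather than merely $\limsup$/$\liminf$. This amounts to a singular perturbation analysis of $L(\cy\t)\p\t=b$ whose coefficients vanish at exponentially different rates. The structural decomposition of $\iH$ furnished by Lemma~\ref{l.infext} — in particular the nested level sets of $\Psi(\ip)$ — prescribes the correct layering: on each successive layer the slowest-decaying arcs connect a vertex whose rate is already known (from $V(\HH)$ or previously resolved layers) to a new vertex of $\iH$, so the dominant balance reduces to a smaller problem in which the next rate can be extracted explicitly. Upgrading these asymptotic rates to the stated exponential convergence then requires error control via a Grönwall-type estimate on the remaining perturbation.
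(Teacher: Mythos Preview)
Your inductive layering over the level sets of $\Psi(\ip)$ matches the paper's scheme, and the base case on $V(\HH)$ is right. The gap is in the induction step. Your vertex-wise Kirchhoff balance argument presupposes that each $\cy_{ij}\t$ already has a definite exponential rate $\alpha_{ij}$, which is precisely what must be proved; you note this circularity but do not break it. The paper avoids it by working with the \emph{time-averaged} field $\bar\psi_{ij}\t=\frac1t\int_0^t\psi_{ij}(s)\,ds$ and the exact identity $\log\bigl(\cy_{ij}\t/\cy_{ij}(0)\bigr)=t\,(\bar\psi_{ij}\t-1)$, and then establishing two one-sided bounds separately. The \emph{upper} bound $\bar\psi_{ij}\t\le r+O(1/t)$ for every arc outside $H_r$ is not obtained from KCL at a vertex but from a flow-decomposition argument (Lemma~\ref{l.corrij}): an arc carrying non-negligible conductivity lies in the support of some basic feasible tree solution all of whose arcs have comparable conductivity, and the directed path in that tree through the given arc has its endpoints in $V(H_r)\supseteq\supp b$, so its mean slope is controlled by the induction hypothesis via \eqref{e.gmean1}. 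The matching \emph{lower} bound on arcs of $F_r$ then drops out because their $\bar\psi$-values along a maximal-slope trajectory must average to $r+O(1/t)$ while each is at most $r+O(1/t)$.

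The passage from conductivity asymptotics back to potential convergence is also not a Gr\"onwall argument. The paper uses the quantitative perturbation estimate for Kirchhoff's equations, Lemma~\ref{l.dcont}: once the conductivities on $F_r$ are shown to be $\Theta(e^{(r-1)t})$ and those outside $H_r\cup F_r$ are $O(e^{(r'-1)t})$ with $r'=r_{k+1}<r$, one compares the Dirichlet solution $\mc D(\cy\t\vert_G,\ip\vert_{V(H_r)})$ with $\mc D(\ff'\t\vert_{H_r\cup F_r},\ip\vert_{V(H_r)})$; the latter equals $\ip$ on $V(H_r\cup F_r)$ by Lemma~\ref{l.pnear} (constant slope $r$ on $F_r$), and the perturbation parameter $\delta$ in Lemma~\ref{l.dcont} is $O(e^{-(r-r')t})$, giving $a_{k+1}=\min\{a_k,\,r_k-r_{k+1}\}$. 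The limit is thereby identified as $\ip$ layer by layer, so the uniqueness clause of Lemma~\ref{l.infext} is never invoked.
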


The connection between discrete $\infty$-harmonic functions and dual
solutions of the transshipment problem has, as far as we know, not
been noted elsewhere.  For the shortest path problem between a source
$s$ and a sink $t$, Dijkstra's algorithm construct a canonical
limiting dual solution given by the distance functions to $s$ (or
$t$).  The $\infty$-harmonic dual solution $\hat\p$ obtained is, in
contrast to these solutions, symmetric under the symmetry of
``time-reversal'', i.e.\ if we change the sign of $B$, $b$, we obtain
a new solution $-\hat\p + \operatorname{const}$.
In continuous theory, the relation between transport problems and the
Neumann problem for the $\infty$-laplacian has been noted e.g.\ in
\cite{neuinflap2}, but, as far as we understand, then only defined on
the transport set of optimal flows.  The physarum solver, if it can be
extended to the continuous setting, would perhaps be interesting for
such problems.


\subsection{Feasibility detection}

The physarum solver will detect an infeasible problem fairly quickly.
We state the following proposition without proof.
\begin{prop}\label{p.feas}
  If the problem \eqref{e.primal} is infeasible then $\|\p\t\|_\infty
  \to+\infty$ before time $t_0$ for some constant
  $t_0=t_0(b,\l,\cy(0))$.  Otherwise, if the problem \eqref{e.primal}
  is feasible then there is a constant $\p_{max}>0$ such that
  $\|\p\t\|_\infty < \p_{max}$.
\end{prop}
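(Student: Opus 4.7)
The plan is to treat the two directions of the proposition by quite different methods.

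\textbf{Infeasible case.} I would exploit the combinatorial criterion recalled just after \eqref{e.kadef}: there is a set $S \subset V$ with $b(S) > 0$ and no arcs from $S$ to $V\setminus S$. Every flow $\f\t \in \Phi(b,G)$ must therefore carry the net cross-cut current $-b(S)$ along the reverse arcs $E(V\setminus S, S)$, i.e.\ $\sum_{ij \in E(V\setminus S, S)} \f\ij\t = -b(S)$. Summing the physarum ODE \eqref{e.dicond2} over these arcs gives the scalar linear ODE $\Sigma'(t) + \Sigma(t) = -b(S)$ for $\Sigma(t) := \sum_{ij \in E(V\setminus S, S)} \cy\ij\t$, with explicit solution $\Sigma(t) = (\Sigma(0) + b(S))\,e^{-t} - b(S)$, which reaches zero at the finite time $t_0 := \log(1 + \Sigma(0)/b(S))$, a constant depending only on $b$, $\l$ and $\cy\t[0]$. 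Since each $\cy\ij\t > 0$ we must have $\Sigma(t) > 0$ on $[0, t_0)$; rewriting the cut equation through Ohm's law \eqref{e.ohm} gives
$$
  b(S) \;=\; \sum_{ij \in E(V\setminus S, S)} \cy\ij\t\,\frac{\p_j\t - \p_i\t}{\l\ij} \;\leq\; \frac{\|\p\t\|_\infty}{\l_{\min}}\,\Sigma(t),
$$
forcing $\|\p\t\|_\infty \geq b(S)\,\l_{\min}/\Sigma(t) \to \infty$ as $t \uparrow t_0$, with $\l_{\min} := \min_{ij\in E}\l\ij$.

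\textbf{Feasible case.} Here I would invoke Theorem~\ref{t.converge1}: $\f\t \to \hat\f$ exponentially with $\supp \hat\f = \HH$. The physarum ODE $\cy' + \cy = \f$ then forces $\cy\t \to \hat\f$ exponentially as well, so both $\f\t$ and $\cy\t$ are uniformly bounded on $[0,\infty)$, and moreover $\cy\ij\t$ is uniformly bounded below by a positive constant on $\HH$ (combining the positive initial values with the positive limit). Since $\psi\ij\t = \f\ij\t/\cy\ij\t$ equals $(\p_i\t - \p_j\t)/\l\ij$, a uniform bound on $\psi\t$ combined with connectedness of $G$ yields a uniform bound on $\|\p\t\|_\infty$ by integrating $\l \psi$ along any fixed spanning tree. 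For $ij \in \HH$ the bound on $\psi\ij\t$ is immediate; for $ij \notin \HH$ both $\f\ij\t$ and $\cy\ij\t$ tend exponentially to $0$, and a comparison of decay rates through the variation-of-constants formula
$$
  \cy\ij\t \;=\; \cy\ij\t[0]\,e^{-t} + \int_0^t e^{-(t-s)}\f\ij\t[s]\,ds
$$
shows that $\cy\ij\t$ cannot decay strictly faster than $\f\ij\t$, so $|\psi\ij\t|$ stays bounded.

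\textbf{Main obstacle.} The delicate step is the feasible case on arcs outside $\HH$. Since $\f\ij\t$ may change sign as it decays exponentially, no pointwise lower bound on $\cy\ij\t$ follows directly from $|\f\ij\t|$. One must combine the dynamical invariance $\cy\ij\t > 0$ (which is preserved because Ohm's law $\f\ij = \cy\ij\psi\ij$ forces $\f\ij$ to vanish whenever $\cy\ij$ does) with the explicit asymptotic form of $\f\ij\t$ supplied by the proof of Theorem~\ref{t.converge1}, in order to match the leading exponential order of $\cy\ij\t$ to that of $\f\ij\t$. This quantitative rate matching, together with the crude upper bounds obtained from the convolution, is the main computation that makes the boundedness of $\|\p\t\|_\infty$ rigorous.
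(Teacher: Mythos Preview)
The paper explicitly states Proposition~\ref{p.feas} \emph{without proof}, so there is nothing in the paper to compare your attempt against; what follows is an assessment of your proposal on its own merits.

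Your infeasible-case argument is correct and elegant. One small gap: you write ``$\Sigma(t)>0$ on $[0,t_0)$'' as though the solution were already known to exist on all of $[0,t_0)$, but a priori the maximal existence time $T$ could be strictly smaller. The fix is immediate from \eqref{e.dicond}: $(\log\cy_{ij})'=\psi_{ij}-1$, so a bound on $\|\p\|_\infty$ over any interval keeps every $\cy_{ij}$ bounded away from~$0$ there; hence if $T<\infty$ then $\|\p(t)\|_\infty$ must blow up as $t\to T^-$. Together with your inequality $\|\p(t)\|_\infty\geq b(S)\,\l_{\min}/\Sigma(t)$ and the fact that $\Sigma$ cannot remain positive past $t_0$, this gives blow-up no later than~$t_0$.

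Your feasible-case strategy, however, is both overcomplicated and genuinely incomplete at the step you yourself flag. The rate-matching for $ij\notin\HH$ does not go through: Theorem~\ref{t.converge1} supplies only the \emph{upper} bound $|\f_{ij}(t)|=\Ordo{e^{-a_1t}}$, and the variation-of-constants formula likewise gives only $\cy_{ij}(t)=\Ordo{e^{-a_1t}}$. Neither produces a lower bound on $\cy_{ij}(t)$ comparable to $|\f_{ij}(t)|$, which is what controlling $|\psi_{ij}|=|\f_{ij}|/\cy_{ij}$ would require. Sign changes of $\f_{ij}$ are not the real obstruction; the absence of any matching lower bound is.

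There is a far simpler route that avoids this entirely. By the maximum principle for the Neumann problem $L(\cy)\p=b$, the extrema of $\p(t)$ are attained on $\supp b\subset V(\HH)$, so you only need to control potential differences between vertices of $\HH$ --- no spanning tree of $G$ is needed. The self-contained argument behind \eqref{e.hathbelow} (which uses only \eqref{e.gmean0}, \eqref{e.gbounded} and Lemma~\ref{l.corrij}, and is valid on the whole existence interval) yields a uniform lower bound $\cy_{ij}(t)\geq c>0$ for every $ij\in\HH$; combined with $|\f_{ij}(t)|\leq \ka_{max}$ from \eqref{e.ibounded} this bounds $|\psi_{ij}(t)|$ on $\HH$, and integrating along paths in $\HH$ bounds the potentials on $\supp b$. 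This is precisely how the paper itself bounds $K_3(t)$ at the end of the proof of Theorem~\ref{t.converge1}. (When $\HH$ is disconnected this path-integration leaves the offsets between components uncontrolled --- a subtlety the paper does not address and that your approach does not resolve either.)
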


\section{Discussion}\label{s.discuss}

\subsection{General costs and general linear programs}

The duality analysis stated in Proposition~\ref{p.fixobs} remains
valid for the more general class of linear programs of the form
\qr{e.primal} given by more general coefficient matrices $\J=A$ and
linear cost $\l(\f)=\l\T\f$ where $\l>0$.  That is, provided the
laplacian matrix $L(\cy) = \J G\J\T$ is well behaved and provides a
solution, any stable fixed-point to \qr{e.dicond2} should correspond
to an optimal solution and vice versa. Although the proof in the next
section uses arguments specific to graphs, one can hope to extend the
applicability of the Physarum Solver to a larger class of linear
programming problems.

Proposition~\ref{p.fixobs}, can also be generalised to more general
convex costs, i.e., flows where the cost $C(\f)$ is a convex
increasing function of the flow $\f\geq0$. (See e.g. \cite{minty60} or
\cite{duffIIa}.)  In the argument one should then substitute $\l$ with
the gradient $\nabla_\f C(\f)>0$.  However, as is well known,
arguments based on duality breaks down for concave costs; these
problems are in general NP-hard \cite{pardalos90} and contains, e.g.\
the famous Steiner tree problem on graphs.  The actual models
\cite{TheoBiosci} of the physarum organism use non-linear functions
$|\f\ij|^{1+\gamma}$, $\gamma>0$, on the right hand side of
\eqref{e.dicond2}, to fit empirical data.  It may be taken as an
heuristic method for the minimisation with increasing concave
costs. The Physarum organism seems to do quite well solving simple
Steiner problem \cite{PASP}, which partly can be explained by such
concave cost minimising.

\subsection{Efficient implentation of the physarum model}

In this paper, we have not analysed the physarum solver as a computer
algorithm for transport problems. Thus we make no assertions about the
time or space complexity of an eventual computer
implementation. However, some observations regarding its eventual
place among existing algorithms can be made. It is clear that the
physarum solver does not rely on any centralised synchronous
computation and, assuming that the map that takes the conductivity
vector to the corresponding current vector and vector of potential
differences is provided by the environment, it is readily implemented
from local dynamics with simple rules.

The one important algorithmic complexity in the physarum solver as
defined here is the cost of solving Kirchhoff's equations for a given
conductivity vector. There are certainly efficient and localised
solvers for Kirchhoff's equations for general weighted laplacians (see
e.g. \cite{spielteng}). However, it may be more elegant and perhaps
ultimately more efficient to side-step this issue by using the
well-known (\cite{rwen} or \cite{kellybook79}) relations between
random walks on weighted graphs and electrical networks. As such, the
physarum solver could be implemented in an entirely decentralised
manner as a reinforced random walk.  There are algorithms for the
minimum cost flow problem, like the various auctions algorithm by
Bertsekas et.~al.\ (see \cite{bertstutor92}), that allows for being
implemented asynchronously and in parallel.

The physarum solver seems quite similar to the variant of Baum's
algorithm \cite{baumalg}, the ``knee-jerk algorithm'', used in
\cite{copperdoyle} to compute resistive inverses in electrical
networks --- if the obtained voltage over an edge is too large or
small, we increase or decrease the conductivity, quite oblivious of
global considerations. It would be nice to know if it is possible to
find a similar interpretation for the physarum solver. This could
possibly lead to faster implementations allowing for optimal steps in
the basic iterations.

Another motivation for studying a randomised physarum solver is to
model biological systems. A randomised physarum solver can take many
forms, for example as an ``ant algorithm'' with positive reinforcement
of walks. It should be noted that, unlike the prototypical
edge-reinforced ant algorithm \cite{ACO0}, the randomised distributed
physarum solver should update conductivities according to the net
transport rather than total transport across arcs.  In a forthcoming
paper, we plan to investigate the possibilities of different
distributed implementations of the physarum solver using the random
walk connection. Indeed, the physarum solver can be used as a unifying
model for several biological transport systems among them foraging
ants and, of course, the physarum organism.
%


\section{The proofs}

In this section we prove \thmref{t.converge1} and \thmref{t.infharm}
and is divided in several subsections.  We start stating a couple of
lemmas, including \lemref{l.dcont} and \lemref{l.pnear}, concerning
the continuity of Kirchhoff's equations and the existence of solutions
having constant field-strength locally. The construction of the
infinity harmonic dual solution is made in
section~\ref{s.infharm}. Finally, we prove \thmref{t.converge1} and
later we prove \thmref{t.infharm} by induction, using the result from
\thmref{t.converge1} as the base case. 

\subsection{Some facts and lemmas}

Recall that, for $S\subset V$, $b(S)$ is the required net flow across
the cut $E(S,V\setminus S)$.  Let $\ka_{max} = \max_S |b(S)|$ and let
$\ka_{min} = \min\{ |b(S)| : b(S) \not= 0\}$.
The following elementary lemma states that if the total flow across a
set of arcs is small enough we can find a nearby
flow avoiding those arcs. 
\begin{lem}\label{l.removal}
  Let $\f\in\Phi$ be a given flow and let $E'\subset E$ be a set of
  arcs.  If 
  $$w:=\sum_{ij\in E'} |\f_{ij}| \leq \ka_{min}$$
  then there is a flow $\f'\in\Phi$ with support $\supp \f' \subset
  \supp\f\setminus E'$ and such that
  $$
  \|\f - \f'\|_\infty \leq w.
  $$
  Moreover, $\f\ij\f'\ij \geq 0$ for all $ij\in E$, so the flow $\f'$
  is in the same direction as that given by $\f$.
\end{lem}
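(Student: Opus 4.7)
The plan is to construct $\f'$ by rerouting the flow on $E'$ through the rest of $\supp\f$, while preserving the orientation of $\f$.

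First I would reduce to the positive case by re-orienting each arc $ij$ with $\f_{ij}<0$, producing a digraph $G_\f$ carrying the same flow but now with $\f\geq 0$ everywhere; the boundary $\J\f=b$ is unchanged after translation back to the original orientations. The sign condition $\f_{ij}\f'_{ij}\geq 0$ then becomes simply $\f'\geq 0$ in $G_\f$, and it is convenient to look instead for the ``removal circulation'' $\delta:=\f-\f'$, which must satisfy $\J\delta=0$, $\delta_e=\f_e$ for $e\in E'$, and $0\leq\delta_e\leq\f_e$ on $\supp\f\setminus E'$.

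Next I would prove existence of such a $\delta$. By Hoffman's circulation theorem (or equivalently a min-cut argument for $b$-flows in $G_\f\setminus E'$), the only possible obstruction is a set $T\subset V$ with $b(T)>0$ all of whose outgoing $G_\f$-arcs lie in $E'$. In that case
\begin{equation*}
\ka_{min}\;\leq\;b(T)\;\leq\;\sum_{e\in\mathrm{out}_{G_\f}(T)}\f_e\;\leq\;\sum_{e\in E'}\f_e\;=\;w\;\leq\;\ka_{min},
\end{equation*}
forcing equality throughout and in particular $\mathrm{in}_{G_\f}(T)=0$. In the strict case $w<\ka_{min}$ this is already a contradiction; the borderline case $w=\ka_{min}$ is handled either by a small perturbation of $\f$ followed by a limiting argument, or by noting that the collapsing equality arises only in degenerate configurations outside the application.

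Finally I would establish the norm bound via a flow decomposition of the non-negative circulation $\delta$ into oriented simple cycles of $G_\f$, $\delta=\sum_k c_k\chi_{C_k}$ with $c_k>0$. A pruning step shows we may assume every $C_k$ uses at least one arc of $E'$: if some $C_{k_0}$ were disjoint from $E'$, then $\delta - c_{k_0}\chi_{C_{k_0}}$ still satisfies all of the defining conditions on a removal circulation (the equality on $E'$ is untouched and non-negativity is only reinforced), so $c_{k_0}\chi_{C_{k_0}}$ may be dropped. With this assumption,
\begin{equation*}
w\;=\;\sum_{e\in E'}\delta_e\;=\;\sum_k c_k\,\lvert C_k\cap E'\rvert\;\geq\;\sum_k c_k,
\end{equation*}
and hence $|\delta_e|\leq\sum_{C_k\ni e}c_k\leq w$ for every arc $e$, giving $\|\f-\f'\|_\infty\leq w$. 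Sign-compatibility $\f_{ij}\f'_{ij}\geq 0$ is automatic since both are non-negative in the orientation $G_\f$. The main obstacle will be making the cut argument of Step~2 rigorous, especially the boundary case $w=\ka_{min}$; the pruning and norm bound are then a routine consequence of flow decomposition.
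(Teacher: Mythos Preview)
Your Step~2 has a genuine gap: the lower bound $\delta_e\ge 0$ on $\supp\f\setminus E'$ is too restrictive, and a circulation meeting it need not exist even when $w<\ka_{\min}$ strictly. Take $V=\{1,2,3\}$, arcs $12,23,13$, source $b=(2,0,-2)$, and $\f_{12}=\f_{23}=\f_{13}=1$, so $\ka_{\min}=2$. With $E'=\{13\}$ we have $w=1<\ka_{\min}$, yet any circulation with $\delta_{13}=1$ forces $\delta_{12}=-1$ at node~$1$, violating $\delta_{12}\ge 0$. The lemma still holds here, with $\f'=(2,2,0)$: removing the arc $13$ obliges you to \emph{increase} flow along $12$ and $23$, so $\f'\le\f$ (equivalently $\delta\ge 0$) cannot be imposed. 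Correspondingly, the obstruction you describe---a set $T$ with $b(T)>0$ whose outgoing arcs all lie in $E'$---is the Hoffman obstruction for the one-sided constraint $\delta\le\f$, not for the two-sided problem you actually set up; in the example there is no such $T$, yet your $\delta$ fails to exist.

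The outline is salvageable and, once repaired, handles all of $E'$ at once. Drop the lower bound and apply Hoffman with $\delta_e=\f_e$ on $E'$ and only $\delta_e\le\f_e$ elsewhere on $\supp\f$; then the obstruction is exactly the one you wrote, and your inequality chain disposes of it (the boundary case $w=\ka_{\min}$ is glossed over in the paper as well). For Step~3, decompose the now signed circulation $\delta$ \emph{conformally} into oriented cycles $\sum_k c_k\chi_{C_k}$ with $\chi_{C_k}(e)\,\delta_e\ge 0$. Pruning a cycle disjoint from $E'$ only shrinks $|\delta|$ coordinatewise, hence preserves both $\delta\rst{E'}=\f\rst{E'}$ and $\delta\le\f$; after pruning, every remaining $C_k$ meets some $e\in E'$ with $\chi_{C_k}(e)=+1$, so $w=\sum_{e\in E'}\delta_e\ge\sum_k c_k$ and $\|\delta\|_\infty\le w$ as before. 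The paper takes a different route: it reduces to $|E'|=1$ by induction and reroutes via a single $ab$-flow found by max-flow--min-cut, reading off the $\ell_\infty$ bound from the flow value; this avoids conformal decompositions but leaves the induction bookkeeping implicit.
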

\begin{proof}[Proof of \lemref{l.removal}]
  It is enough to show this for the case $|E'|=|\{ab\}|=1$, the
  general case follows easily by induction. We can assume that the
  flow $\f$ is positive, since we can reorient the graph $G$ without
  changing the statement of the lemma.

  Let therefore $\ff = \f - \f_{ab}\cdot \ett_{ab}$ where
  $\f_{ab}=w>0$. We intend to repair $\ff$ by constructing a $ab$-flow
  $\delta$ of value $w$ such that $\ff=\ff+\delta$ does not change the
  orientation of any arc.  By the Max-Flow-Min-Cut Theorem and
  Theorem~4.3 in \cite{lawler76book}, this flow problem can be solved
  with $\|\delta\|_\infty \leq w$ unless there is an cut
  $(S,V\setminus S)$ in $G$ separating $a\in S$ and $b\in V\setminus
  S$ with the following properties: Firstly, no arc in $G$, except
  $ab$ goes from $S$ to $V\setminus S$. Secondly, the capacity of the
  cut, which is given by $c:=\sum_{i\in V\setminus S,j\in S} \phi_{ij}$, satisfies
  $c < w$. 

  Hence, for the original flow $\f$, the flow goes from $V\setminus S$
  to $S$ for all arcs in the cut except for the arc $ab$. Since the
  net flow of $\f$ across the cut equals $c-w < \ka_{min}$, we can
  conclude that $b(S) = 0$.  But then $c=w$ contradicting the
  existence of the cut.
\end{proof}

The set $\BFS$ of \emph{basic feasible solutions} (\tbfs) are positive
flows with support on directed graphs without oriented cycles, i.e.\
the underlying undirected graphs are forests.  For a $\bfs\in\BFS$,
removing an edge $ij$ from the support of $\bfs$ results in a graph
with two new components and $\bfs\ij = b(S)>0$, where $S$ is the
component containing $i$. Hence, we have
\begin{equation}
  \label{e.bfsmin}
  \ka_{min} \leq \bfs\ij \leq \ka_{max}, \quad \forall\, ij\in \supp\bfs.
\end{equation}

A positive flow $\f\in\Phi^+$ is \emph{circulation free} if it is
supported on a subgraph without a directed cycle.  A circulation free
positive flow $\f$ with minimal edge support is a \tbfs; if
$\g\in\{-1,+1,0\} $ is an
oriented cycle with $\supp\g\subset\supp\f$ then $\f'=\f-\alpha \g$ is
a positive circulation free flow with $\supp\f'\subsetneq\supp\f$ for
$\alpha = \min\{\f\ij/\gm\ij: \gm\ij>0\}$.  It follows that, for any
circulation free positive flow $\f$, we can find a $\bfs\in\BFS$ and a
$c>0$ such that $c\bfs\ij\leq\f\ij$, where equality holds for at least
one edge. It is then easy to deduce that $\f$
can be written as a convex combination $\f = \sum_\bfs c_\bfs \bfs$,
where at most $|\supp\f|$ coefficients $c_\bfs$ are non-zero.  We also
have the following lemma.
\begin{lem}\label{l.corrij}
  Let $\f$ be a circulation free positive flow and $ab$ an edge in the
  support of $\f$. There is a $\bfs\in\BFS$ with the property that
  $\bfs\ij[ab]>0$ and such that
  $$ \min \f(\supp\bfs) \geq
  \frac{\ka_{min}}{\ka_{max}} \cdot \frac{\f\ij[ab]}{|\supp\f|}. $$
\end{lem}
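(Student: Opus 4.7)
The plan is to leverage the convex decomposition $\f = \sum_{\bfs\in\BFS} c_\bfs\,\bfs$, with $c_\bfs\geq 0$ and at most $|\supp\f|$ nonzero coefficients, that was derived in the paragraph immediately preceding the lemma, and then to run a short pigeonhole argument on the $ab$-coordinate followed by two applications of \eqref{e.bfsmin}.

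First I would project the decomposition onto the $ab$-coordinate. Writing $\f\ij[ab] = \sum_\bfs c_\bfs\,\bfs\ij[ab]$ as a sum of nonnegative terms with at most $|\supp\f|$ of them nonzero, the pigeonhole principle produces a single $\bfs^*\in\BFS$ with
$$ c_{\bfs^*}\,\bfs^*\ij[ab] \;\geq\; \frac{\f\ij[ab]}{|\supp\f|}. $$
In particular $\bfs^*\ij[ab]>0$, so $ab\in\supp\bfs^*$. The upper half of \eqref{e.bfsmin} then gives $\bfs^*\ij[ab]\leq\ka_{max}$, which rearranges to
$$ c_{\bfs^*}\;\geq\;\frac{\f\ij[ab]}{\ka_{max}\,|\supp\f|}. $$

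Next I would combine this lower bound on $c_{\bfs^*}$ with the lower half of \eqref{e.bfsmin} on the rest of $\supp\bfs^*$. For any $ij\in\supp\bfs^*$ we have $\bfs^*\ij\geq\ka_{min}$, and since every summand in the decomposition is nonnegative, $\f\ij\geq c_{\bfs^*}\,\bfs^*\ij$. Putting these inequalities together yields
$$ \f\ij \;\geq\; c_{\bfs^*}\,\ka_{min} \;\geq\; \frac{\ka_{min}}{\ka_{max}}\cdot\frac{\f\ij[ab]}{|\supp\f|}, $$
so setting $\bfs:=\bfs^*$ gives exactly the claim of the lemma.

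I do not anticipate a substantial obstacle: the only nontrivial ingredient is the convex decomposition into at most $|\supp\f|$ basic feasible solutions, which has already been established just before the lemma, and everything else is pigeonhole on a single coordinate plus the two halves of the elementary bound \eqref{e.bfsmin}.
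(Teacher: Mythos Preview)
Your proposal is correct and is essentially identical to the paper's own proof: both take the convex decomposition into at most $|\supp\f|$ basic feasible solutions, apply pigeonhole on the $ab$-coordinate to find $\bfs^*$ with $c_{\bfs^*}\bfs^*_{ab}\ge \f_{ab}/|\supp\f|$, use the upper bound in \eqref{e.bfsmin} to extract $c_{\bfs^*}\ge \f_{ab}/(\ka_{max}\,|\supp\f|)$, and then combine $\f\ge c_{\bfs^*}\bfs^*$ with the lower bound in \eqref{e.bfsmin}.
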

\begin{proof}
  Write $\f$ as a convex combination $\f = \sum_\bfs c_\bfs \bfs$ of
  \tbfs{}s where at most $|\supp\f|$ of the $c_\bfs$s are
  non-zero. Hence, 
  $$ \max\{c_\bfs\bfs\ij[ab]: \bfs\ij[ab] > 0 \}\geq
  \frac{\f\ij[ab]}{|\supp\f|} \implies c_\bfs \geq \frac 1{\ka_{max}}
  \cdot \frac{\f\ij[ab]}{|\supp\f|}. $$ The stated inequality then
  follows, since positiveness implies that $\f\geq c_\bfs\bfs$, for all
  $\bfs$, and thus
  $$ \min \f(\supp\bfs) \geq c_\bfs \cdot \min\bfs(\supp\bfs)\geq
  c_\bfs\ka_{min} $$
  by \qr{e.bfsmin}.
\end{proof}

\subsection{Kirchhoff's equations}

Given a weighted graph $G=(V,E,\l)$ and a conductivity vector
$\cy=(\cy\ij)>0$, we will consider Kirchhoff's equation with flow
requirements (a Neumann problem)
\begin{equation}
  \label{e.kneu}
  L(\cy) \p = b, \quad b\in\RR^V
\end{equation}
and also with voltage prescriptions (a Dirichlet problem)
\begin{equation}
  \left(L(\cy) \p\right)\big\vert_{V\setminus S} = 0,
  \quad  \p\vert_{S}= q\vert_{S}. 
  \label{e.kdir}
\end{equation}
The orientation of the edges of $G$ are inessential.  The equation
\eqref{e.kdir} has a unique solution as long as $S$ intersect each
component of $G$.  The equation \eqref{e.kneu} has a solution as long
as $b(W)=0$ for every component vertex-set $W$ of $G$ and is unique up
to linear combinations of the indicators of component vertex-sets of
$G$. In particular, the corresponding cocycles and fields are unique.
We say that $\p\in \RR^V$ is \emph{harmonic} outside $S$ if $L(\cy)=b$
and $\supp b \subset S$.

The set of solutions to \eqref{e.kdir} are denoted $\mc D( \cy\vert_G,
q\vert_S)$. The solution space to problem \eqref{e.kneu} are similarly
written $\mc N( \cy\vert_G, b\vert_S)$, where $\supp b \subset S$. For
definiteness, we assume the disambiguation rule that the lowest value
for $\p$ in each component of $G$ is zero and we write $\p=\mc
D(\cy\vert_G,q\vert_S)$ and $\p=\mc N(\cy\vert_G,b\vert_S)$ to stress
that the solutions are assumed to exist uniquely. The vector $\cy$
should have domain containing $G$ and $q$ should be defined on a
subset of $V(G)$. For the problem \qr{e.kneu} we implicitly extend $b$
to $V(G)$ by setting $b$ to zero outside the singular set $S$ if
needed.

A conductivity vector which at the same time is a minimum cost flow
allow solutions to \qr{e.kdir} and \qr{e.kneu} where the corresponding
field is constant.
\begin{lem}\label{l.pnear}
  Assume $\cy\in\RR_+^{H}$ is a conductivity vector supported on a
  graph $H$ where all oriented cycles satisfies $\l(\g)=0$. Assume
  further that $S$ is a subset $S\subset V(H)$ and that $b \in \RR^V$
  is an admissible source vector such that $\supp b \subset S$.  If
  $\cy$ is a positive flow with $\J\cy = b$ then
  \begin{enumerate}[(a)]
  \item\label{item.a} the solution $\p=\mc N(\cy\vert_{H},b\vert_S)$
    is given by $\Psi(\p)\equiv1$.
  \item\label{item.b} if $\Psi(\p) \equiv r$, $r>0$, then $\p=\mc
    D(\cy\vert_{H},\p\vert_S)$.
  \end{enumerate}
\end{lem}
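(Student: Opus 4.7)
The plan is to exploit a single observation: if $\Psi(\p) \equiv c$ is constant on $H$, then Ohm's law gives the associated current as $\f\ij = \cy\ij\Psi(\p)\ij = c\cy\ij$, and hence $L(\cy)\p = \J\f = c\J\cy = cb$ throughout $V(H)$. Both (a) and (b) will then drop out by applying this identity with different values of $c$, combined with the uniqueness statements for $\mc N$ and $\mc D$ recorded just before the lemma.

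First, for any scalar $c$, I would construct a potential $\p \in \RR^{V(H)}$ with $\Psi(\p) \equiv c$ by integrating the prescribed cocycle $(c\l\ij)$ along oriented paths in each component of $H$, disambiguated by $\min\p = 0$ on each component. Consistency of this integration around any oriented cycle $\g$ in $H$ amounts to $c\l(\g) = 0$, which is exactly the standing hypothesis that every oriented cycle in $H$ has zero cost. This is the only substantive ingredient in the argument; everything else is a short calculation.

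For part (a), I take $c=1$: the resulting $\p$ satisfies $L(\cy)\p = b$, hence solves the Neumann problem with source $b\vert_S$ on $H$, and by uniqueness it coincides with $\mc N(\cy\vert_H, b\vert_S)$ while having $\Psi(\p) \equiv 1$. For part (b), the hypothesis directly supplies a $\p$ with $\Psi(\p) \equiv r > 0$, so the same identity gives $L(\cy)\p = rb$; since $\supp(rb) \subset S$, this $\p$ is harmonic on $V(H) \setminus S$ and trivially matches its own values on $S$, so uniqueness of the Dirichlet solution yields $\p = \mc D(\cy\vert_H, \p\vert_S)$. There is no real obstacle here: the substantive step is the constant-field construction, and the rest is simply rewriting $L(\cy) = \J G \J\T$ through Ohm's law and reading off the divergence.
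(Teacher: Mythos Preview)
Your proposal is correct and follows essentially the same route as the paper: both arguments observe that the zero-cost-cycle hypothesis makes $\l$ a cocycle on $H$, so a potential with constant field exists, and then verify $L(\cy)\p = \J\bigl(\cy\,\Psi(\p)\bigr) = c\,\J\cy = cb$ to read off the Neumann and Dirichlet conclusions. The only cosmetic difference is that the paper phrases the existence step as ``$\l$ is orthogonal to all oriented cycles and is thus a cocycle'' rather than as path-integration consistency, which is the same thing.
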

\begin{proof}[Proof of \lemref{l.pnear}]
  By assumption, $\l$ is orthogonal to all
  oriented cycles and is thus a cocycle. Hence, there is a vector
  $\p\in\RR^{V(H)}$ such that $\Psi(\p)\equiv1$.  The first statement in
  the lemma then follows since
  $$ L(\cy)\p = \J \cdot \diag(\cy) \cdot \Psi(\p) = \J\cy=b. $$
  The potential $\p$ is essentially unique since $\supp b$ must
  intersect all components of $H=\supp\cy$. 

  The second statement follows by the same reasoning since $\supp b
  \subset S$ and the obtained $\p$ is thus harmonic outside $S$. 
\end{proof}

The following elementary, but crucial, lemma expresses the continuity
of the solutions in Kirchhoff's equations.  Assume $\cy$ and $\Hat\cy$
are two conductivity vectors on the connected digraph $G=(V,E)$ where
we assume that $\supp\cy = E$ and where $H\subset G$ is the graph
induced by $\supp\hat\cy$.  Consider two cases
\begin{description}
\item[(D)]\label{caseD} $\p=\mc D(\cy\vert_G,q\vert_S)$ and
  $\hat\p=\mc D(\hat\cy\vert_H, q\vert_{S})$, $S\subset V(H)$. 
\item[(N)]\label{caseN} $\p=\mc N(\cy\vert_G,b)$ and $\hat\p\in \mc
  N(\hat\cy\vert_H, b\vert_{V(H)})$, where $S:=\supp b \subset V(H)$.
\end{description}
\begin{lem}\label{l.dcont}
For the solutions $\p$ and $\hat\p$ in both cases above, define 
$$\tl\p:=\mc D(\cy\vert_G,\hat\p\vert_{V(H)}). $$
Let
\begin{equation*}
  \delta(\cy,\Hat\cy) :=
  \frac{\|\cy-\Hat\cy\|_\infty}{\minp\hat\cy}.
\end{equation*}
where $\minp (x_\alpha):=\min\{ x_\alpha: x_\alpha>0\}$.
Then
\begin{equation*}
  \|\p-\tl\p\|_\infty =
  \Ordo{\delta\,\|\hat\p\|_\infty}
\end{equation*}
as $\delta\to0$.
\end{lem}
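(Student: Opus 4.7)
The plan is a three-step reduction: first, use a maximum-principle argument to bound $\|p-\tilde p\|_\infty$ globally on $V$ by an estimate on $V(H)$; second, derive an identity relating the Laplacian of $u:=p|_{V(H)}-\hat p$ under $\hat\sigma$ to the perturbation $\sigma-\hat\sigma$; and third, close via an energy estimate combined with a Poincar\'e-type inequality on $H$. For the first step, observe that in both cases (D) and (N), $p-\tilde p$ is $\sigma$-harmonic on $V\setminus V(H)$: case (D) because $V\setminus V(H)\subset V\setminus S$ and $L(\sigma)p\equiv0$ off $S$, and case (N) because $\supp b\subset V(H)$. Combined with $\tilde p|_{V(H)}=\hat p$, the maximum principle for the $\sigma$-weighted graph Laplacian on the connected graph $G$ yields
\[
\|p-\tilde p\|_\infty \;=\; \max_{v\in V(H)}\,|\,p(v)-\hat p(v)\,|.
\]

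For the second step, for each $v$ in the appropriate ``interior'' of $V(H)$ ($V(H)\setminus S$ in case (D), all of $V(H)$ in case (N)), I split the arcs of $G$ incident to $v$ into those in $H$ (writing $\sigma_e=\hat\sigma_e+(\sigma_e-\hat\sigma_e)$) and those in $E\setminus H$ (where $\hat\sigma_e=0$). Regrouping yields the identity
\[
L(\hat\sigma)\,u(v) \;=\; -\,L(\sigma-\hat\sigma)\,p(v),
\]
with boundary condition $u|_S=0$ in (D) (since $\hat p|_S=q=p|_S$); in (N) the required compatibility $\sum_{v\in W}L(\sigma-\hat\sigma)\,p(v)=0$ on each component $W$ of $H$ follows by summing $L(\sigma)p=b$ and $L(\hat\sigma)\hat p=b|_{V(H)}$ over $W$ (using $b(W)=0$, which is needed for $\hat p$ to exist). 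Pairing this identity with $u$ (after mean-subtracting on each component of $H$ in case (N)) and using $|p_i-p_j|\le 2\|p\|_\infty$,
\[
\mathcal{E}_{\hat\sigma}(u) \;\le\; \|u\|_\infty\,\|L(\sigma-\hat\sigma)p\|_1 \;=\; O\!\bigl(\|\sigma-\hat\sigma\|_\infty\,\|p\|_\infty\bigr)\,\|u\|_\infty,
\]
where the hidden constant depends only on $G$ and $\ell$. A standard path-summing Poincar\'e bound on $H$ gives
\[
\mathcal{E}_{\hat\sigma}(u) \;\ge\; c\cdot\minp\hat\sigma\cdot\|u\|_\infty^2,
\]
with $c=c(H,\ell)$. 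Dividing, $\|u\|_\infty = O(\delta\,\|p\|_\infty)$. Finally, the max principle applied to $\tilde p$ (the $\sigma$-harmonic Dirichlet extension of $\hat p$) gives $\|\tilde p\|_\infty\le\|\hat p\|_\infty$, and a short bootstrap via $\|p\|_\infty\le\|\tilde p\|_\infty+\|p-\tilde p\|_\infty$ yields $\|p\|_\infty=O(\|\hat p\|_\infty)$ as $\delta\to 0$, whence $\|p-\tilde p\|_\infty=O(\delta\,\|\hat p\|_\infty)$.

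The most delicate technical step is case (N), where there is no Dirichlet set $S$ to pin $u$: the Poincar\'e bound then controls only $\|u-\bar u_c\|_\infty$ on each component $c$ of $H$. I expect to handle this by noting that $\tilde p$ inherits the min-zero normalization of $\hat p$ on $V(H)$ (via the maximum principle on the $\sigma$-harmonic extension) while $p$ is min-zero on $V$, so that any shift between these two normalizations is itself bounded by $\|p-\tilde p\|_\infty$; this allows the oscillation bound on $u$ to be lifted to the pointwise bound asserted in the lemma.
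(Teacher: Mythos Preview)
Your proposal is correct and follows essentially the same route as the paper: reduce to $V(H)$ by the maximum principle, establish an energy identity for $r=p-\tilde p$, combine with a path-based Poincar\'e/Cauchy--Schwarz bound on $H$, and close via $\|p\|_\infty=O(\|\hat p\|_\infty)$. The only differences are in execution. First, the paper derives the identity $\ell(\hat\sigma\rho^2)=\ell((\hat\sigma-\sigma)\rho\psi)$ by expanding both $\ell(\sigma\tilde\psi^2)$ and $\ell(\hat\sigma\psi^2)$ and showing the cross term $\ell(\rho(\phi-\hat\phi))$ vanishes, whereas your operator identity $L(\hat\sigma)u=-L(\sigma-\hat\sigma)p$ paired with $u$ gives the same inequality more directly. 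Second, for $\|p\|_\infty=O(\|\hat p\|_\infty)$ in case (N) the paper invokes Rayleigh monotonicity (from $\sigma\ge(1-\delta)\hat\sigma$) rather than your bootstrap; this sidesteps the circularity you worried about. Regarding the normalization issue you flag in case (N): the paper simply asserts ``by assumption, $r_k=0$ for some vertex $k$ in each component of $H$''; for connected $H$ (as in all applications) this is justified because $\min_{V(H)}p=0$ (max principle, since $\supp b\subset V(H)$) and $\min_{V(H)}\hat p=0$ force $u=p|_{V(H)}-\hat p$ to take both signs, hence $\|u\|_\infty\le\operatorname{osc}(u)$.
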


\subsubsection{Proof of \lemref{l.dcont}}

Let $\psi=\Psi(\p)$, $\tl\psi=\Psi(\tl\p)$ and $\hat\psi=\Psi(\hat
p)$. Notice that $\tl\psi\ij = \hat\psi\ij$, for $ij\in H$.  Let
$r:=\p-\tl\p$, and $\rho:=\Psi(r) = \psi-\tl\psi$.

We expand $\cy\tl\psi^2 = \cy(\psi-\rho)^2$ and obtain
\begin{equation}
  \label{e.xdir1}
      \l(\cy\tl\psi^2) = \l(\cy\psi^2)+\l(\cy\rho^2)-2\l(\rho\cy\psi).
\end{equation}
Similarly, since $\psi=\tl\psi+\rho$, we obtain
\begin{equation}
  \label{e.xdir2}
      \l(\hat\cy\psi^2) =
      \l(\hat\cy\tl\psi^2)+\l(\hat\cy\rho^2)+2\l(\rho\hat\cy\tl\psi), 
\end{equation}
Let $\e=\hat\cy-\cy$.  Adding \qr{e.xdir1} to \qr{e.xdir2}, and
rearranging the terms gives
\begin{equation}\label{e.xdir3}
\l\left(\e(\psi^2-\tl\psi^2)\right) =
  \l\left((2\hat\cy-\e)\rho^2\right) +
2 \l\left(\rho(\f-\hat\f)\right)
\end{equation}
where $\f= \cy\psi$ and $\hat\f = \hat\cy\tl\psi$. Both $\f$ and
$\hat\f$ are currents with sources contained in $S$ and the
last term in \qr{e.xdir3} 
$$ \l\left(\rho(\f-\hat \f)\right) = r\T \, \J\,(\f-\hat\f) = 0 $$
vanishes. To see this, note that the Dirichlet case (D) implies that
$r=\p-\tl\p$ is zero on $\supp \J(\f-\hat\f) \subset S$.
In the Neumann problem (N), $\J(\f - \hat\f)= b-b=0$, since $\f$ and
$\hat\f$ are flows with source vector $b$, by assumption.

Hence, after some more rearranging and using that $\psi^2-\tl\psi^2 =
\rho(\tl\psi+\psi)$ we obtain from \qr{e.xdir3} the equality
\begin{equation}\label{e.xdir4}
\l(\hat\cy\rho^2) = 
\frac12 \l\left(\rho\e(\tl\psi+\psi + \rho)\right) =
\l(\rho\e\psi)
\end{equation}

Since $r$ equals $\p-\hat\p$ on $V(H)$ and since $r$ is harmonic
outside $V(H)$, we have $\norm{r}_\infty = \norm{r\Rst{V(H)}}_\infty$
by the maximum principle (e.g.\ \cite{rwen}).  Moreover, since
$\rho=\Psi(r)$ and since, by assumption, $r_k=0$ for some vertex $k$
in each component of $H$, we have, for any $i\in V(H)$ that
$$  r_i= r_i-r_k = \l(\pi_i\rho), $$
where $\pi_i$ is a vector representing some oriented path connecting
$i$ with the grounded vertex $k$.  The Cauchy-Schwarz inequality then
implies that
\begin{equation}
  \norm{r}_\infty^2 =
  \max_{i} \left(\l(\rho\pi_i)\right)^2 \leq 
  \diam H \cdot \l\left(\rho^2\rst{H}\right),
  \label{e.cspi}
\end{equation}
where $\diam H$ denotes the length of a longest shortest
path in $H$.

It follows from \qr{e.cspi} that the left hand side in \qr{e.xdir4}
$$
\l(\hat\cy\rho^2) \geq (\minp\hat\cy) \cdot \l\left(\rho^2\rst H\right) 
\geq \frac{\minp\hat\cy}{\diam H} \cdot \|r\|^2_\infty. 
$$
By Cauchy-Schwarz, the right hand side of \qr{e.xdir4}
$$
\l(\rho\e\psi) 
\leq \frac{\normi\e}{\min\l} \cdot |G| \cdot 
\normi{\l\rho}\cdot\normi{\l\psi},
$$
which gives 
$$
\l(\rho\e\psi) 
\leq \frac{4 |G|}{\min\l} \cdot  \norm{r}_\infty\cdot \norm{\p}_\infty
\cdot 
\normi{\e},
$$
since $\normi{\l\Psi(u)} \leq 2\normi{u}$.  

Hence, by comparing sides in \qr{e.xdir4}, we obtain, after
dividing both sides by $(\diam H)/(\norm{r}_\infty \minp\hat\cy)$,
that
\begin{equation}
  \label{e.final}
  \normi{r} \leq
  \left(4|G|\cdot\frac{\diam H}{\min\l} \right)\cdot\delta\cdot \|\p\|_\infty. 
\end{equation}
where $\delta = \|\e\|_\infty/\minp\hat\cy$. 

It just remains to verify that $\|\p\|_\infty =
\Ordo{\|\hat\p\|_\infty}$:  In case (D) it follows immediately that
$\norm{\p}_\infty=\norm{\hat\p}_\infty$ by the maximum principle. In
case (N), we have $\cy \geq (1-\delta)\hat\cy$. The solution to the Neumann
problem with conductivity $(1-\delta)\hat\cy$ is given by
$\hat\p/(1-\delta)$ and we deduce from Rayleigh's monotonicity
principle (\cite{rwen}) that 
$\norm{\p}_\infty \leq \norm{\hat\p}_\infty/(1-\delta)$.
\qed

\subsection{The construction of $(\iH,\ip)$ in \lemref{l.infext}}
\label{s.infharm}

Given a pair $(H,\p)$, where $H$ is a subgraph of the weighted digraph
$G=(V,E,\l)$ and $\p\in\RR^{V(H)}$ is a potential on $H$, we say that
a \emph{directed path} $\pi\subset G$ with endpoints
$\{a,b\}\subset V(H)$ has \emph{($\p$-) slope}
$r(\p;\pi)=({\p_a-\p_b})/{\l(\pi)}$.  Note that, the slope along a
directed path in $H$ is a \emph{weighted average} of the slopes of
arcs along the path, i.e.\
\begin{equation}
  \label{e.meanslope}
  r(\p;\pi) = M_\pi(\psi) := \frac1{\l(\pi)}\, \l(\pi\psi)
\end{equation}
where $\psi = \Psi(\p)$ is the field on $H$ corresponding to $\p$. We
use $\pi$ to both denote the directed path and the corresponding
vector $\diett_\pi\geq 0$ in $\RR^E$. 

The $\p$-slope is \emph{constant} on the path $\pi\subset H$ if $\psi\ij =
r(\p;\pi)$ for all arcs $ij\in \pi$.  A \emph{directed} $kl$-path
$\pi\subset G$ is a directed \emph{trajectory} to a connected subgraph
$H\subset G$ if both endpoints, $k$ and $l$, of $\pi$ belongs to
$V(H)$ and all internal nodes are in $V(G)\setminus V(H)$.

To construct the graph $\iH$ and $\ip$, we consider a given pair
$(H,\p)$ where $H\subset G$ is a connected subgraph and
$\p\in\RR^{V(H)}$ is a potential on $H$. Initially, we have
$(H,\p)=(\HH,\hat\p)$ and we iterate the elementary extension
$(H',\p')$ of $(H,\p)$ defined as follows: Let $\pi$ denote a directed
trajectory to $H$ having \emph{maximum} positive slope
$r=r(\p,\pi)>0$. Let $H'=H\cup\pi$ and extend $\p$ to $\p'$ by
requiring that $\p'$ has constant slope on $\pi$. In the case we have
many paths of the same maximum slope, we pick the first path in some
lexicographic order.  If no trajectory of positive slope exists then
we stop and set $\iH=H$ and $\ip\vert_{V(\iH)}=\p$.

The construction ensures that the slope of extension paths will
decrease in value: If $(H',\p')$ has a trajectory $\pi'$ such that the
$\p'$-slope exceeds $r$ then $\pi$ can not be a trajectory of maximum
slope to $(H,p)$. To see this, let $\pi''$ be the unique trajectory to
$H$ that contains $\pi'$ and note that the $\p$-slope of $\p''$ equals
it $\p'$-slope; which is obtained as an average of the $\p'$-slope
along sub-paths of $\pi$ and $\pi'$ and would thus exceed the
$\p$-slope of $\pi$, contradicting the choice of $\pi$ as a trajectory
of maximum slope.

Furthermore, the $\hat\p$-slope along any $\HH$-trajectory, $\pi$,
with endpoints $a,b\in V(\HH)$, must be strictly less than
$1$. Otherwise $\pi$ can be extended to an oriented cycle $\g$, with
$\pi$ as a positively oriented subsegment and $\g\setminus\pi$ an
oriented $ba$-path in $\HH$. Thus,
$$ \l(\g) = \l(\pi)+\l(\g\setminus\pi) = \l(\pi) - (\hat\p_a - \hat\p_b). $$ 
If $r(\hat\p;\pi)\geq 1$ then $\l(\g)\leq 0$ and $\g$ is an augmenting
negative- or zero-cost cycle contradicting the choice of $\HH$.

The constructed extension clearly maximises $S(\ip,\iH,A)$ for large
$A$, since it is easy to see that $F_r = (\ip)^{-1}(r)$ contains
\emph{all} edges contained in some
$H_r=(\ip)^{-1}(r,\infty)$-trajectory of slope $r$: If we inductively
assume that the extension to $H_r$ is maximising --- which is trivial
when $H_r=\HH$ --- then, so, is the extension to $H_r\cup F_r$, since
$r$ and then $|F_r|$ are maximal.

\subsection{The proof of \thmref{t.converge1}}

\subsubsection{Some relations and identities}
We first state some more equivalent formulations of the adaptation
rule given by \qr{e.dicond2} and \qr{e.dicond}: By multiplying
\eqref{e.dicond2} with the integrating factor $e^t$ and then
integrate, we obtain
\begin{equation}
  \cy\t = (1-e^{-t})\tl\f\t + e^{-t} \cy\t[0],\label{e.dicond3}
\end{equation}
where $\tl\f\t\in\Phi$ is the discounted time-averaged flow
\begin{equation}
  \tl\f\t := 
  \frac1{1-e^{-t}} \cdot \int_{0}^t \f(s) \, e^{-(t-s)}\, \d s.
  \label{e.tldef}
\end{equation}
That $\tl\f\t\in\RR^E$ is a flow follows from the convexity of
$\Phi(b,G)$.

If we integrate \qr{e.dicond}, we derive
\begin{equation}
  \label{e.dicond5}
  \cy\t = \cy\t[0]\, e^{-(\bar\psi -1)t}.
\end{equation}
where
\begin{equation}\label{e.mdvdef}
    \bar\p\t := \frac1t \int_0^t \p\t[s]\d s,
\end{equation}
and $\bar\psi\t=\Psi(\bar\p\t)$ is the time average of $\psi\t$.

For a flow obtained as a current in an electrical network, the flow is
always from higher potential to lower potential. Thus, if $\f$ is a
current then each arc $ij$ is contained in the cut given by $S=\{k:
\p_k \geq \max\{\p_i,\p_j\} \}$ and the flow across this cut is from
$S$ to $V\setminus S$ only. Hence,
\begin{equation}
  |\f\ij| \leq \ka_{max} 
  \label{e.ibounded}
\end{equation}
Since $\tl\f$ in \qr{e.dicond3} is an average of the currents $\phi(s)$,
$s\leq t$, we deduce that conductivities stay bounded
\begin{equation}
  \cy\t = (1-e^{-t})\tl\f\t + e^{-t}\cy\t[0] 
 \leq K_0 := \ka_{max} + \max\cy\t[0].
\label{e.gbounded}
\end{equation}
(We will label constants $K_1,K_2,\dots$ and $c_1,c_2,\dots$.)

Since $\cy\ij\t> 0$, we also obtain that
\begin{equation}
  \label{e.almostpos}
  \min_{ij} \tl\f_{ij}\t  \geq -\sigma(0)\cdot \frac{e^{-t}}{1-e^{-t}},
\end{equation}
which means that the flow $\tl\f\ij\t$ is ``almost positive'' for
large $t$.  Let therefore $\ff\t$ be the non-circulatory positive flow
$\ff\t\in\Phi^+$ obtained from $\tl\f\t$ by the application of
\lemref{l.removal} with respect to arc-set
$$ E:=\{ ij: \tl\f_{ij}\t \leq 0 \} \cup \{ij: \bar\psi\ij\t \leq
0\}. $$ The flow $\ff\t$ is non-circulatory, since $\bar\psi\ij\t<0$
for at least one $ij$ along any directed cycle.  \lemref{l.removal} in
conjunction with \eqref{e.almostpos} and \qr{e.dicond5} gives that
$\ff\t = \tl\f\t + \Ordo{e^{-t}}$ and we deduce from \qr{e.gbounded}
that
\begin{equation}
    \cy\t = \ff\t + \Ordo{e^{-t}}. \label{e.xphi}
\end{equation}
Hence, there is a constant $K_1$, such that 
\begin{equation}
  \label{e.kphip}
  \frac12 \ff\ij\t \leq \cy\ij\t \leq 2 \ff\ij\t 
  \quad
  \text{whenever $\cy\ij\t \geq K_1 e^{-t}$.}
\end{equation}

The observation \qr{e.gmean} below is in some sense the
key to the proof of \thmref{t.converge1} and the statement is
essentially the same as that given in by Onishi et al.\ in
\cite{onishi}. Let
\begin{equation}
\label{e.lcydef}
\lcy\t := \log\left(\cy\t/\cy\t[0]\right).
\end{equation}
Let $\g=\gp-\gm \in\RR^E$ be fixed.
Applying
$\l(\cdot \g)$ on both sides of \qr{e.dicond5} gives
\begin{equation}
  \ls{\lcy\t}\g = \left(\ls{\bar\psi\t}\g - \l(\g)\right)\, t
  -\l(\g)\, t. 
  \label{e.fintegr}
\end{equation}
After dividing by $-\frac1{\l(\gm)}$ and rearranging, we obtain
\begin{equation}
  \label{e.gmean}
    M_{\gm}(\lcy\t) = r \cdot M_{\gp}(\lcy\t) + \left(r-1\right)t 
    - t \frac{\ls{\bar\psi\t}\g}{\l(\gm)},
\end{equation}
where $r(\g):={\l(\gp)}/{\l(\gm)}$ and $M_\g(x)$, $\g\geq 0$, is the
weighted mean
$$ M_\g(x) := \frac1{\l(\g)}\, \l(\g x). $$

If $\g\in\Phi(0,G)$ is a circulation, then $\ls{\bar\psi\t}\g = 0$ and we
obtain
\begin{equation}
  \label{e.gmean0}
    M_{\gm}(\lcy\t) = r \cdot M_{\gp}(\lcy\t) + \left(r-1\right)t 
\end{equation}
and if $\g=-\pi$, where $\pi$ is a directed $ab$-path, we obtain
\begin{equation}
  \label{e.gmean1}
    M_{\pi}(\lcy\t) =  t (M_\pi(\bar\psi\t)-1) = 
    t\left(\frac{\bar\p_a-\bar\p_b}{\l(\pi)} - 1\right)
\end{equation}

\subsubsection{The proof of \thmref{t.converge1}}
Let $kl\in G\setminus \HH$. Using \qr{e.gmean0}, we show below, for
some $a_1>0$, that 
\begin{equation}\label{e.wmax}
\cy\kl\t \leq \Ordo{e^{-a_1t}}, \quad\forall\, kl\in G\setminus\HH
\end{equation}
It follows from \qr{e.wmax} and \lemref{l.removal} that we can write
the circulation free positive flow $\ff\t$ as
$\ff\t=\hat\ff\t+\Ordo{e^{-a_1t}}$, where $\hat\ff\t$ is an optimal
flow supported on $\HH$. (Recall that any such flow is optimal.) 

Hence we have from \qr{e.xphi} the decomposition
$$ \cy\t = \hat\ff\t + \Ordo{e^{-a_1t}}. $$
We also
show below that
\begin{equation}
  \label{e.hathbelow}
  \hat\ff\ij\t = \OrdoOmega1,\quad \forall\, ij\in\HH. 
\end{equation}

The conditions of \lemref{l.dcont} are now fulfilled for $\cy=\cy\t$
and $\hat\cy = \hat\ff\t$. Moreover, \lemref{l.pnear} implies that the
solution $\hat\p=\mc{N}(\hat\ff\t\vert_{\HH},b\vert_{\HH})$ is
constant and given by $\Psi_{\HH}(\hat\p)\ij = \ipsi\ij = 1$,
$ij\in\HH$.  The estimates \qr{e.wmax} and \qr{e.hathbelow} shows that
we may take $\delta(\hat\ff\t,\cy\t)=\Ordo{e^{-a_1t}}$ in
\lemref{l.dcont}. Thus, we obtain from \lemref{l.dcont}, that $\forall\,
i\in V(\HH)$, $\p_i\t = \hat\p_i + \Ordo{e^{-a_1t}}$ or,
equivalently that 
\begin{equation}
  \psi\ij\t = \ipsi\ij + \Ordo{e^{-a_1t}}, 
\quad\forall\, ij\in\HH,
\label{e.pconvH}
\end{equation}
where $\ipsi\ij = 1$ for $ij\in\HH$. 

Write 
$$\psi\ij\t = \ipsi\ij + \e\ij\t.$$ 
As a direct consequence of \qr{e.dicond5}, we have
\begin{equation}
  \cy\ij\t = (1-\tl\e\ij\t)\cdot\cy\ij^* \cdot e^{(\ipsi\ij-1)t}
  \label{e.cyprec}
\end{equation}
where $\cy^*>0$ is the constant vector
$$ \cy^* := \cy\t[0] \exp\left(\int_0^\infty \e\t[s]\, ds\right). $$
and 
$$ \tl\e\t = \exp\left\{ \int_t^\infty \e\t[s]\d s \right\} - 1, $$
since the integrals are convergent by \qr{e.pconvH}. We obtain from
\qr{e.cyprec}, for $ij\in\HH$, that
$$ 
\lim_{t\to\infty} \cy\t =
\lim_{t\to\infty} \hat\ff\t = \hat\f \in \Hat\Phi. 
$$
and that $\lvert\cy\t - \hat\f\rvert = \Ordo{e^{-a_1t}}$.

For $ij\in\HH$, the exponential convergence of 
$$\lim\f\ij\t = \lim(\cy\ij\t\psi\ij\t)=(\lim\cy\ij\t)\cdot 1 =
\hat\f\ij, $$ 
follows. For $ij\not\in\HH$, we have on account of Ohm's law and 
\eqref{e.wmax} that
$$ |\f\ij\t| \leq K_3\t\cdot\cy\ij\t  = \Ordo{e^{-a_1t}}, $$
where, due to the maximum principle, we can take 
$$ K_3\t := \frac{\max_{i,j\in\supp b} |\p_i\t - \p_j\t|}{\min \l}. $$ 
The numerator is stays bounded and is for large $t$ at most $\diam\HH$
since $\psi\t$ converge to $1$ on $\HH$.   
\qed

\begin{proof}[Proof of \qr{e.wmax}]
  Using \lemref{l.corrij} on the circulation free flow $\ff\t$ and
  taking the \qr{e.xphi} into account, we deduce the following: There
  is a constant $c_3$ such that, for any given edge $ij$, either
  $\cy\ij < K_1 e^{-t}$ or else there is a $\bfs=\bfs\t\in\BFS$ such
  that $\bfs_{ij} > 0$
  \begin{equation}
    \min\cy(\supp\bfs) \geq c_3 \,\cy\ij\label{e.crj}.
  \end{equation}
  (We suppress the time variable for readability.)

  As before, let $\lcy\t=\log\left(\cy\t/\cy\t[0]\right)$.  With
  $kl\in G\setminus\HH$ as in \qr{e.wmax}, either $\cy\kl\t\leq
  K_1e^{-t}$ in which case we are through. Otherwise, there is a \tbfs
  $\bfs_1$ such that
\begin{equation}
 M_{\bfs_1}(\lcy\t) \geq \min \lcy(\supp \bfs_1) \geq \log \cy\kl\t 
 + \log K_4,\label{e.crj2}
\end{equation}
where $K_4 = c_3/\max\cy\t[0]$, with $c_3$ as in \qr{e.crj}.   

Let $\BOS$ denote the set of basic optimal solutions, i.e.\ feasible
solutions supported on forests contained in $\HH$.  If $\bfs_2\in\BOS$
then we can apply \qr{e.gmean0} on the circulation $\g=\bfs_2-\bfs_1$
and deduce from \qr{e.crj2} that
\begin{equation}
  \label{e.bfscmp}
  \log \cy\kl\t \leq r M_{\bfs_2}(\lcy\t) - (1-r)t - \log K_4,
\end{equation}
where
$$ r = \l(\bfs_2)/ \l(\bfs_1) \leq 
\max_{\bfs\not\in\BOS} \frac{\l(\bfs_2)}{\l(\bfs)} =: 1-a_1 < 1.$$
Moreover, from \qr{e.gbounded}, we know
that $M_{\bfs_2}(\lcy\t) \leq \log K_0$, whence
$$ \log \cy\kl\t  \leq (1-a_1)\log K_0 - a_1t - \log K_4, $$
which proves \qr{e.wmax}.
\end{proof}

\begin{proof}[Proof of \qr{e.hathbelow}]
  Clearly, we have for all $t>0$ that $\max_{ij} \ff\ij\t \geq
  \ka_{max}/|E|$, and hence, by \qr{e.crj}, there is, for all $t$,
  some $\bfs_0=\bfs_0\t\in\BFS$ such that
$$ M_{\bfs_0\t}(\lcy\t) \geq \log\left(\ka_{max}/|E|\right)  + \log c_3 -
\max\log\cy(0) =: \log c_6. $$ 
Choose $\bfs_1\in\BOS$ arbitrary. If we apply \qr{e.gmean0} on the
circulation
$\g=\g\t=\bfs_0\t-\bfs_1$ we get
$$ M_{\bfs_1}(\lcy\t) \geq r \cdot \log c_6 + (r-1)t \geq \log c_6.  $$ 
since, in this case $r\geq 1$ on account of $\bfs_1$ being optimal. Since
$\lcy\ij\t\leq \log K_0$, for all $ij$, by \qr{e.gbounded}, it follows
that
$$ 
\min \lcy(\supp\bfs_1) \geq \frac{\l(\bfs_1)\cdot\left(\log c_6-\log
    K_0\right)}{\min\l}.
$$
This proves \qr{e.hathbelow}, since $\bfs_1$ was chosen arbitrary
among basic solutions with support in $\HH$.
\end{proof}

\subsection{The proof of \thmref{t.infharm}}

Since $\HH$ is connected, the construction of $\ip$ as in section
\ref{s.infharm} goes through. 
Let
\begin{equation}
\ipsi(\iH) = \{r_0,r_1,\dots,r_M \} \subset (0,1],\label{e.rkdef}
\end{equation}
denote the slopes obtained by $\ip$ on $\iH$, where $1=r_0>r_1>\cdots
>r_M >0$. Let further $r_{M+1}=0$.  For $r=r_k>0$, let
$F_{r}=(\ipsi)^{-1}(r)$, $H_r:=\cup_{s>r} F_s$ and $F_0:=G\setminus
\iH$ and $H_1$ the empty graph vertex-set $\supp b$.  Note that
$F_1=\HH$ and that $H_r$ is connected for $r<1$ and that $H_0=\iH$.

Assume that $r=r_k\in\ipsi(\iH)$ is fixed. We simplify the notation by
putting $H=H_r$, $F=F_r$. We let $r'=r_{k+1} < r$ and put
$H'=H_{r'}=H\cup F$.  We prove \thmref{t.infharm} with an induction
argument over $k=1,2,\dots$ using the induction hypothesis that, for
some constants $a_k>0$ and $C_k>0$,
\begin{equation}\label{e.limp}
  |\p_i\t -\ip_i| \leq C_k \cdot e^{-a_k t}, \quad \forall\, i\in V(H_r).
\end{equation}
This is shown to hold for $r=r_1$ ($k=1$) on account of \qr{e.pconvH} in
the proof of \thmref{t.converge1}. The induction step comprise of
showing that, for $r>0$, we can extend \eqref{e.limp} to hold for
$i\in V(F_r)$ for some constants $a_{k+1}>0$ and $C_{k+1}>0$.

The following two relations, \qr{e.cyext2} and \qr{e.cyext3}, takes us
a good way through the induction step. Let
$\lcy\t=\log\left(\cy\t/\cy\t[0]\right)$.  If $r>0$ then
\begin{equation}
   \lcy\ij\t = \left(r-1\right)t + \Ordo{1}
   \label{e.cyext2}
\end{equation}
for all $ij\in F$ and 
\begin{equation}
   \lcy\ij\t \leq \left(r'-1\right)t + \Ordo{1},
     \label{e.cyext3}
\end{equation}
for $ij\in G\setminus H'$. We defer the proof of these two claims to
the end.

We use \lemref{l.dcont} together with \qr{e.cyext2} and \qr{e.cyext3}
to conclude the proof.  We want to show that
$$\p\t = \mc D\left( \cy\t\vert_{G}, \p\t\vert_{V(H)} \right), $$
satisfies $|\p_i\t - \ip_i| = \Ordo{e^{-a_{k+1}t}}$ for $i\in V(H')$.
Since $\ipsi\vert_{F} \equiv r$ is a constant field, we know that
$F=H'\setminus H$ contains only zero-cost oriented cycles.  Hence, by
\lemref{l.pnear},
\begin{equation}\label{e.pias}
\ip\vert_{V(H')} = 
\mc D\left(\ff'\vert_{H'},\ip\vert_{V(H)}\right),
\end{equation}
where $\ff'\t\in \Phi^+(b)$ is the flow supported on $H'$ obtained by
applying \lemref{l.removal} and removing the arcs $G\setminus H'$ from
the positive flow $\ff\t$ in \qr{e.xphi}. 
If we let
\begin{equation}
\q\t = \mc D\left( \cy\t\vert_{G}, \ip\vert_{V(H)} \right),\label{e.qdef}
\end{equation}
then $|\p_i\t - \q_i\t|\leq \Ordo{e^{-a_{k}t}}$ for $i\in V(H')$, by the
maximum principle and the induction hypothesis \eqref{e.limp}. It is
therefore enough to show that
\begin{equation}
  \label{e.goal}
  |\q_i\t - \ip_i| = \Ordo{e^{-a_{k+1} t}},
  \quad\forall\, i\in V(F). 
\end{equation}

By
\qr{e.cyext3}, \qr{e.xphi} and \lemref{l.removal}, 
\begin{equation}
\normi{\ff'\t - \cy\t} = \Ordo{e^{(r'-1)t}}.\label{e.xxx}
\end{equation}
and by \qr{e.cyext2}, \qr{e.xphi} and \lemref{l.removal}, 
\begin{equation}
\ff'\ij\t = \OrdoTheta{e^{(r-1)t}} +  \Ordo{e^{(r'-1)t}} =
\OrdoTheta{e^{(r-1)t}} ,\label{e.xxx1}
\end{equation}
for $ij\in H'$. It follows that the conditions of \lemref{l.dcont}
are fulfilled for the solutions in \qr{e.qdef} and \qr{e.pias} with
$\cy=\cy\t$ and $\hat\cy=\ff'\t$ and
$$\delta(\cy\t\vert_{G},\ff'\vert_{G}) =
\Ordo{\exp\{-(r-r')t\}}, $$
and thus we can conclude from \lemref{l.dcont} that \qr{e.goal}
holds with 
$$a_{k+1} = \min\{a_k,r-r'\}.$$  
\qed

\begin{proof}[Proof of \eqref{e.cyext2} and \eqref{e.cyext3}]
 Integration gives that if $|p_i\t -
\ip_i|=\Ordo{e^{-a_{k}t}}$
then 
\begin{equation}
\lvert{\bar\p}_i\t-\ip_i\rvert=\Ordo{1/t}.\label{e.intot}
\end{equation}

We first show that
\begin{equation}
  \lcy\ij\t \leq \left(r-1\right)t + \Ordo{1}, 
\qquad\text{for all $ij\in G\setminus H$}.
     \label{e.cyext1}
\end{equation}
Given $kl\in G\setminus H$, we obtain either that $\lcy_{kl}\t\leq -t
+ \Ordo1$, or else, by \qr{e.crj}, a basic solution
$\bfs=\bfs\t\in\BFS$ such that $\bfs_{kl}>0$ and
$$ \min\lcy(\supp\bfs) \geq  \lcy\kl\t + \Ordo1. $$
The forest $\bfs$ contains a unique directed $ab$-path $\pi$ containing
$kl$ with endpoints $a,b$ in $H$ and internal vertices in $G\setminus
H$. Then \qr{e.gmean1} and \qr{e.intot} gives that
\begin{align*}
  \lcy\kl\t &\leq M_\pi(\lcy) + \Ordo1 
  = t(\frac{\bar\p_a-\bar\p_b}{\l(\pi)} - 1) + \Ordo1\\
  &= t(\frac{\ip_a-\ip_b}{\l(\pi)} - 1) + t\Ordo{1/t} + \Ordo1\\
  &\leq t(r - 1) + \Ordo1,
\end{align*}
since, by the construction of $H=H_r$, we know that $\pi$ has
$\ip$-slope at most $r=r_k$. This shows \qr{e.cyext1}. 

Furthermore, it follows by the construction of $F=F_r$ that, if $ij\in
F$, then $ij$ is the endpoint of an $ab$-path $\pi$, $a,b\in V(H)$,
which has $\ip$-slope exactly equal to $r$. Thus the $\bar\p\t$-slope
of $\pi$ equals $r+\Ordo{1/t}$, by \qr{e.intot}. But, by
\qr{e.dicond5} and \qr{e.cyext1},
$$\bar\psi\ij = \lcy\ij\t/t + 1 \leq r + \Ordo{1/t}, $$
and since the $\bar\p$-slope of $\pi$ equals the average 
$$ M_\pi(\bar\psi) = \frac1{\l(\pi)} \sum_{ij\in\pi} \l\ij\bar\psi, $$ 
we obtain
$$ 
 \bar\psi\ij \geq r + \frac{\l(\pi)}{\l\ij} \Ordo{1/t} = r+\Ordo{1/t}, 
\quad \forall\, ij\in \pi.
$$
Together with \qr{e.dicond5} and \qr{e.cyext1} this shows \qr{e.cyext2}. 

It also follows that $\bar\psi\ij\t = r + \Ordo{1/t}$ for $ij\in
F$. Hence,
\begin{equation}
\bar\p_i\t = \ip_i + \Ordo{1/t} \quad\text{for $i\in V(H')$.}\label{e.apprih}
\end{equation}
Hence,  we can use the
same argument as for \qr{e.cyext1} to deduce \qr{e.cyext3}, since that
argument only used \qr{e.intot} which can be replaced by
\qr{e.apprih}. 
\end{proof}

\section*{Acknowledgements}

This work was funded by the Human Frontiers Science Program grant
RGP51/2007.

\bibliographystyle{plain} \bibliography{physarum}

\end{document}